\documentclass[12pt,twoside]{article}
\usepackage{a4wide,color}
\sloppy
\parskip=0.8ex
\usepackage{amssymb}
\usepackage{amsmath}

\newtheorem{theorem}{Theorem}

\newtheorem{kg}{Theorem (\KG)}

\newtheorem{corollary}{Corollary}
\newtheorem{lemma}{Lemma}
\newtheorem{thbs}{Theorem (Schmidt)}

\def\ds{\displaystyle}
\renewcommand{\Bbb}[1]{\mathbb{#1}}
\newcommand{\I}{{\Bbb I}}         
\newcommand{\N}{{\Bbb N}}         
\newcommand{\R}{{\Bbb R}}         
\newcommand{\Rp}{{\Bbb R}^{+}}    
\newcommand{\Z}{{\Bbb Z}}         



\def\KG{Khintchine-Groshev}

\newcommand{\cA}{{\cal A}}

\newcommand{\cH}{{\cal H}}

\newcommand{\cR}{{\cal R}}



\newcommand{\ve}{\varepsilon}

\newcommand{\diam}{\operatorname{diam}}
\newcommand{\dist}{\operatorname{dist}}

\newcommand{\vv}[1]{{\mathbf{#1}}}

\renewcommand{\le}{\leqslant}
\renewcommand{\ge}{\geqslant}
\newcommand{\nz}{\setminus\{\vv 0\}}

\newenvironment{proof}{\textsc{Proof.}}{\ \newline\hspace*{\fill}$\boxtimes$}

\def\q{\mathbf{q}}

\begin{document}

\title{Classical metric Diophantine approximation revisited: the Khintchine-Groshev theorem}

\author{Victor Beresnevich\footnote{EPSRC Advanced Research Fellow, grant EP/C54076X/1}
\\ {\small\sc York}  \and Sanju Velani\footnote{Research supported by EPSRC grants EP/E061613/1 and EP/F027028/1 }
\\ {\small\sc York}}

\date{}

\maketitle

\begin{abstract}
Let $\cA_{n,m}(\psi)$ denote the set of $\psi$-approximable points
in $\R^{mn}$. Under the assumption that the approximating function
$\psi$ is monotonic, the classical  Khintchine-Groshev theorem
provides an elegant probabilistic criterion for the Lebesgue measure
of $\cA_{n,m}(\psi)$.  The famous Duffin-Schaeffer counterexample
shows that the monotonicity assumption on $\psi$ is absolutely
necessary when $m=n=1$. On the other hand, it is known that
monotonicity is not necessary when $n \geq 3$ (Schmidt) or when
$n=1 $ and $m\geq 2$ (Gallagher). Surprisingly, when $n=2$ the
situation  is unresolved.  We deal with this remaining case and
thereby remove all unnecessary conditions from the classical
Khintchine-Groshev  theorem. This settles a multi-dimensional  analogue of Catlin's
Conjecture.
\end{abstract}


\section{Introduction}

Throughout, $n \geq 1 $ and $m  \geq 1$ are integers and $\I^{nm}$
is the unit cube $[0,1]^{nm}$ in $ \R^{nm}$. Given a function
$\psi:\N\to\Rp$, let
$ \cA_{n,m}(\psi)$ denote the set of $ \vv X 
\in\I^{nm}$  such that
$$|\vv q\vv X+ \vv p |<\psi(|\vv
q|) $$ holds for infinitely many $(\vv p,\vv q) \in \Z^m \times
\Z^n\nz$. Here  $|\cdot|$ denotes the supremum norm, $\vv X =
(x_{ij})$ is regarded as an $n\times m$ matrix and $\vv q$ is
regarded as a row. Thus, $\vv q\vv X $ represents a point in $ \R^m$
given by the  system
$$
q_1x_{1j} + \dots + q_n x_{nj}  \qquad (1 \leq j \leq m) \,
$$
of $m$ real linear forms in $n$ variables. For obvious reasons the
function  $\psi$ is referred to as an \emph{approximating function}
and points in $\cA_{n,m}(\psi)$ are said to be
$\psi$-{\it{approximable}}.

In the case that the approximating function is monotonic, the
classical Khintchine-Groshev theorem provides a beautiful and
strikingly simple criterion for the `size' of $\cA_{n,m}(\psi)$
expressed in terms of $nm$-dimensional Lebesgue measure. The
following is an improved modern version of this fundamental result
-- see \cite{Beresnevich-Dickinson-Velani-06:MR2184760} and
references within. Given a set $X\subset\I^{nm}$, let $|X|$ denote
the $nm$-dimensional Lebesgue measure of $X$.

\begin{kg}
Let $\psi:\N\to\R^+$. Then
  \begin{equation}\label{e:001}
    |\cA_{n,m}(\psi)| = \begin{cases} 0 \
      &\text{if } \quad \sum_{q=1}^\infty q^{n-1}\psi(q)^m<\infty, \\[2ex]
      1 \
      &\text{if } \quad  \sum_{q=1}^\infty q^{n-1}\psi(q)^m=\infty \text{ and $\psi$ is monotonic}.
                  \end{cases}
  \end{equation}
\end{kg}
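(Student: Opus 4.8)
The plan is the classical one: prove the convergence half by a direct (convergence) Borel--Cantelli argument, and the divergence half by a second-moment/quasi-independence argument, with monotonicity of $\psi$ invoked only in the divergence case. For $\q\in\Z^n\nz$ put
\[
E_\q=\bigl\{\vv X\in\I^{nm}:\ |\q\vv X+\pb|<\psi(|\q|)\ \text{for some}\ \pb\in\Z^m\bigr\},
\]
so that $\cA_{n,m}(\psi)=\limsup_{|\q|\to\infty}E_\q$; the whole theorem is thus a statement about the $nm$-dimensional Lebesgue measure of this $\limsup$ set. Because the defining condition decouples across the $m$ columns of $\vv X$, each $E_\q$ is a Cartesian product of $m$ copies of the single set $\{\tb\in[0,1]^n:\|q_1t_1+\dots+q_nt_n\|<\psi(|\q|)\}$ (here $\|\cdot\|$ is distance to $\Z$), which is a union of $\asymp|\q|$ mutually disjoint (provided $\psi(|\q|)<\tfrac12$) slabs normal to $\q$, of thickness $\asymp\psi(|\q|)/|\q|$ and spaced $\asymp1/|\q|$ apart. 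Hence $|E_\q|\asymp\psi(|\q|)^m$ and, more generally, $|E_\q\cap B|\asymp|B|\,\psi(|\q|)^m$ for every ball $B$ of radius $\gg1/|\q|$; and since $\#\{\q\in\Z^n\nz:|\q|=q\}\asymp q^{n-1}$ we obtain $\sum_{\q}|E_\q|\asymp\sum_{q\ge1}q^{n-1}\psi(q)^m$. Thus the dichotomy in \eqref{e:001} is precisely the dichotomy between convergence and divergence of $\sum_{\q}|E_\q|$.

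For the convergence case there is then nothing more to do: if $\sum_{q}q^{n-1}\psi(q)^m<\infty$ then $\sum_{\q}|E_\q|<\infty$ (only the upper bound $|E_\q|\ll\psi(|\q|)^m$, valid for arbitrary $\psi$, is needed), and the Borel--Cantelli lemma gives $|\limsup_{|\q|\to\infty}E_\q|=0$, i.e. $|\cA_{n,m}(\psi)|=0$.

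For the divergence case, first reduce: as $\psi$ is monotonic we may assume it non-increasing (the non-decreasing case being trivial or reducible to a constant $\psi$), and we may assume $\psi(q)<\tfrac12$ for all large $q$, since otherwise $\cA_{n,m}(\psi)=\I^{nm}$ (take $\pb$ a nearest integer point to $-\q\vv X$). It then suffices to show $|\cA_{n,m}(\psi)\cap B|\gg|B|$ for every ball $B\subseteq\I^{nm}$ with an absolute implied constant, for then the Lebesgue density theorem applied to the (measurable) complement forces $|\cA_{n,m}(\psi)|=1$. To obtain this uniform positive density we apply the divergence form of the Borel--Cantelli lemma inside $B$: ordering $\Z^n\nz$ as $(\q_k)$ and setting $A_k=E_{\q_k}\cap B$, the lower bound $|E_\q\cap B|\asymp|B|\,\psi(|\q|)^m$ together with divergence of $\sum q^{n-1}\psi(q)^m$ gives $\sum_k|A_k|=\infty$, so it remains to prove the quasi-independence estimate
\[
\sum_{|\q|,\,|\q'|\le Q}|E_\q\cap E_{\q'}\cap B|\ \ll\ |B|^{-1}\Bigl(\sum_{|\q|\le Q}|E_\q\cap B|\Bigr)^2+\sum_{|\q|\le Q}|E_\q\cap B|
\]
with absolute implied constants; this then yields $|\limsup_k A_k\cap B|\gg|B|$.

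The crux --- and the step I expect to be the real obstacle --- is the pairwise bound underlying this estimate. When $\q,\q'$ are linearly independent the column sets meet in tubes about codimension-two affine subspaces, and a geometry-of-numbers computation gives $|E_\q\cap E_{\q'}|\ll(\psi(|\q|)\psi(|\q'|))^m$ multiplied by a factor measuring how far $\q,\q'$ are from being proportional (governed by the $2\times2$ minors of $\binom{\q}{\q'}$, equivalently the index $[\Z^n:\Z\q+\Z\q']$); summed over transversal pairs this contributes $\ll(\sum_{|\q|\le Q}|E_\q\cap B|)^2$. The difficulty is the near-parallel and the exactly parallel pairs ($\q'\in\Q\q$), for which the simple product bound fails: one must count how many $\q'$ with $|\q'|\le Q$ lie close to a given line through the origin and control the extra correlation --- for the parallel pairs this reduces to a one-dimensional computation whose total contribution is governed by $\sum_{k}k^{-n}$ (convergent, hence harmless, exactly when $n\ge2$) together, where necessary, with the monotonicity of $\psi$. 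It is here that the hypotheses bite: for $n\ge3$ (Schmidt) or $n=1,\,m\ge2$ (Gallagher) the monotonicity can be removed, whereas the case $n=2$ without monotonicity requires the finer analysis that occupies the remainder of this paper. Finally, this entire divergence argument can be repackaged through the ubiquitous-systems machinery of \cite{Beresnevich-Dickinson-Velani-06:MR2184760}: the system of hyperplanes $\{\vv X:\q\vv X+\pb=\vv 0\}$ is shown, via Minkowski's linear forms theorem, to be locally ubiquitous, and the general ubiquity theorem then delivers the full-measure (indeed the Hausdorff-measure) conclusion --- though the quasi-independence estimates above are exactly what drives that general theorem, so the obstacle is merely relocated, not removed.
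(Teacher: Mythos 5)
The paper does not re-prove this theorem --- it is quoted as a known result, with the reference \cite{Beresnevich-Dickinson-Velani-06:MR2184760} --- so your proposal is necessarily a different route, and it is the classical one: a convergence Borel--Cantelli argument for the $0$-part, and a second-moment (quasi-independence) argument plus the Lebesgue density theorem for the $1$-part. That skeleton is correct. Two substantive remarks and one comparison. (1) The ``factor measuring how far $\q,\q'$ are from proportional, governed by the $2\times2$ minors'' is a misconception: for non-parallel $\q_1,\q_2$ the sets are \emph{exactly} pairwise independent, $|B(\q_1,\delta_1)\cap B(\q_2,\delta_2)|=(2\delta_1)^m(2\delta_2)^m$ (Sprind\v{z}uk; quoted here as Lemma~\ref{l2}). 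The angular dependence you worry about is a feature of individual slabs only and washes out once you take the union over $\pb\in\Z^m$. Consequently there is no ``near-parallel'' regime at all; the only delicate pairs are the exactly parallel ones. (2) The parallel-pair estimate is precisely where monotonicity bites and where the real work lies, and the phrase ``governed by $\sum_k k^{-n}$ together, where necessary, with the monotonicity of $\psi$'' is a gesture, not a proof: this step needs to be supplied, and it is the entire content of the divergence half. Comparison with what the paper does prove: for $nm>1$ the paper dispenses with monotonicity by passing from $\cA_{n,m}(\psi)$ to the coprime set $\cA'_{n,m}(\psi)$, for which one has the uniform bound $|B'(\q_1,\delta_1)\cap B'(\q_2,\delta_2)|\ll(\delta_1\delta_2)^m$ even for parallel $\q_1\ne\pm\q_2$ (Lemma~\ref{lem9}), and it replaces your ``positive density inside every ball $B$, then Lebesgue density'' reduction by a zero-one law for $\cA'_{n,m}(\psi)$ (Lemma~\ref{l5}). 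The latter removes the need for a $B$-local second-moment estimate, leaving only the global one (Theorem~\ref{tqia}), which is a genuine simplification worth knowing.
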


\noindent The convergence part is reasonably straightforward to
establish and is free from any assumption on $\psi$.
The divergence part constitutes the main substance of the
Khintchine-Groshev theorem and involves the monotonicity assumption
on the approximating function. It is worth mentioning that in the
original statement of the theorem
\cite{Groshev-1938,Khintchine-1924, Khintchine-1926} the stronger
hypothesis that $q^n\psi^m(q)$ is monotonic was assumed.
 The goal of this article is to
investigate the role of the monotonicity hypothesis in the
Khintchine-Groshev theorem.

In the one-dimensional case ($m=n=1$), it is well know that the
monotonicity hypothesis in the Khintchine-Groshev theorem is
absolutely crucial.  Indeed, Duffin $\&$  Schaeffer
\cite{Duffin-Schaeffer-41:MR0004859} constructed a non-monotonic
function $\psi$ for which $\sum_{q} \psi(q) $ diverges but
$\cA_{1,1}(\psi)$ is of measure zero. In other words the
Khintchine-Groshev theorem is false  without the monotonicity
hypothesis  and the conjectures of Catlin
\cite{Catlin-76:MR0417098} and Duffin $\&$ Schaeffer
\cite{Duffin-Schaeffer-41:MR0004859}  provide appropriate
alternative statements -- see below.  The
 Catlin  and Duffin-Schaeffer conjectures represent two key unsolved  problems in metric number theory.

Beyond the one-dimensional case the situation is very different and
surprisingly incomplete. If $n=1$ and $m \geq 2$, a theorem of
Gallagher \cite{Gallagher-65:MR0188154} implies that the
monotonicity assumption in the Khintchine-Groshev theorem can be
safely removed. Furthermore, the monotonicity assumption can also be
removed if $n \geq 3$,  this time as a consequence of a result of
Schmidt \cite[Theorem 2]{Schmidt-1960} or alternatively a more
general result of Sprind\v{z}uk \cite[\S\,I.5, Theorem
15]{Sprindzuk-1979-Metrical-theory} -- also see
\cite[\S5]{Beresnevich-Bernik-Dodson-Velani-Roth}. It is worth
mentioning that the results of Schmidt and Sprind\v{z}uk are
quantitative and we shall discuss this `stronger' aspect of the theory
at the end of the paper in \S\ref{asypsec}.    Despite the
generality, the theorems of Schmidt and Sprind\v{z}uk leave the case
$n=2$ unresolved and to the best of our knowledge the case is not
covered by any other known result. In this paper we show that the
monotonicity assumption is unnecessary when $n=2$ and thereby
establish the following clear-cut statement that is best possible.

\begin{theorem}\label{t}
Let $\psi:\N\to\R^+$ and $nm>1$. Then
  \begin{equation*}
    |\cA_{n,m}(\psi)| =
      1 \quad  \text{if }  \quad \sum_{q=1}^\infty q^{n-1}\psi(q)^m=\infty .
  \end{equation*}
\end{theorem}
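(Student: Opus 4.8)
The plan is to establish the divergence part of the Khintchine-Groshev theorem without monotonicity by focusing on the only outstanding case $n=2$, since $n=1,m\ge2$ (Gallagher) and $n\ge3$ (Schmidt/Sprind\v{z}uk) are already known. The natural framework is a suitable zero-one law together with a second-moment / quasi-independence argument applied to the sets
$$
A_q \ = \ \{\vv X\in\I^{2m}: |\vv q\vv X+\vv p|<\psi(|\vv q|)\text{ for some }\vv p\in\Z^m,\ \text{some }\vv q\text{ with }|\vv q|\asymp q\},
$$
and then invoking a divergence Borel-Cantelli lemma. First I would record the standard reductions: by an inclusion argument one may assume $\psi(q)\to 0$ (else $\cA_{n,m}(\psi)=\I^{nm}$ trivially), and by summation-by-parts / dyadic decomposition one may replace the sum $\sum q^{n-1}\psi(q)^m$ by its dyadic blocks, so it suffices to produce, for a divergent sequence of scales, a lower bound on the measure of the relevant union that is bounded below independently of the scale. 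A Cassels-Gallagher type zero-one law then promotes a positive-measure statement to full measure, so the whole problem is reduced to a quantitative lower bound on $|A_q|$ and on the overlaps $|A_q\cap A_{q'}|$.

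The heart of the matter is the overlap estimate, and this is where $n=2$ is genuinely harder than $n\ge3$: the classical arguments of Schmidt exploit that for $n\ge3$ the family of "near-resonant" hyperplanes $\{\vv q\vv X+\vv p\approx 0\}$ is sparse enough (the relevant counting function for integer points $\vv q$ of bounded height lying close to a given such hyperplane has enough room) that quasi-independence $|A_q\cap A_{q'}|\ll |A_q|\,|A_{q'}|$ holds on average; when $n=2$ these counting estimates degenerate and one must work harder. The plan is to fix $\vv q,\vv q'$ primitive and, after removing the common GCD, estimate the measure of the set of $\vv X$ simultaneously close to two resonant families; this reduces to a lattice-point count governed by $\gcd$-type cancellation, and one sums over $\vv q,\vv q'$ in dyadic ranges, using that for $m\ge 1$ the extra $m$-fold product in the definition of $A_q$ (the $m$ linear forms) supplies enough averaging — this is precisely the mechanism by which $nm>1$, rather than $n\ge3$ alone, suffices. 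One then needs the elementary but essential GCD sum bound $\sum_{q,q'\le Q}\big(\gcd(q,q')/\min(q,q')\big)^{\text{something}}\ll (\sum_{q\le Q}\cdots)^2$ to close the second-moment inequality.

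Concretely the steps are: (i) reduce to $n=2$ and to $\psi$ bounded, with $\sum_{q} q\,\psi(q)^m=\infty$; (ii) set up dyadic scales $Q_t=2^t$ and define $A_q$ as above, noting $|A_q|\gg \min\{1,\ q\,\psi(q)^m\}$ by a standard fibering over the $m$ forms and a count of admissible $\vv q$; (iii) prove the key overlap bound $\sum_{q,q'\in[Q_t/2,Q_t]}|A_q\cap A_{q'}|\ll \big(\sum_{q\in[Q_t/2,Q_t]}|A_q|\big)^2 + \sum_q |A_q|$, via the lattice/GCD argument indicated; (iv) apply the divergence Borel-Cantelli lemma (Chung-Erd\H{o}s form) to conclude $|\limsup_q A_q|>0$ when $\sum_q|A_q|=\infty$, i.e. when the Khintchine-Groshev sum diverges; (v) apply a zero-one law to upgrade this to full measure; (vi) observe $\limsup A_q\subset\cA_{n,m}(\psi)$ up to the harmless loss of the $\asymp q$ versus $|\vv q|=q$ distinction, which is absorbed by the dyadic setup. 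I expect step (iii) — the overlap estimate in the critical dimension $n=2$, and in particular obtaining the correct decay in $\gcd(q,q')$ without any help from a third variable — to be the main obstacle, and it is presumably where the real novelty of the paper lies; everything else is a careful assembly of now-standard metric-Diophantine machinery.
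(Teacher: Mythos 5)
Your high-level architecture (positive measure via a Chung--Erd\H{o}s type divergence Borel--Cantelli lemma, then a Cassels--Gallagher zero--one law to upgrade to full measure) matches the paper, and you correctly identify the pairwise-overlap estimate as the crux. But there is a genuine gap at exactly that point, and it is not merely a technical obstacle to be ground through: the sets $A_q$ you propose to use, defined without a coprimality restriction on $(\vv p,\vv q)$, do not satisfy quasi-independence on average in general. The paper is explicit about this: the analogue of its Theorem~\ref{tqia} for the unrestricted sets is ``probably not in general true,'' and this is essentially the same pathology underlying the Duffin--Schaeffer counterexample. If $\vv q_1=k_1\vv q$ and $\vv q_2=k_2\vv q$ are parallel, the overlap $|B(\vv q_1,\delta_1)\cap B(\vv q_2,\delta_2)|$ for the unrestricted sets can be of the order of $\min(\delta_1,\delta_2)^m$ rather than $(\delta_1\delta_2)^m$, and no amount of ``GCD cancellation'' in the sum over $\vv q_1,\vv q_2$ can be expected to repair this for an arbitrary, non-monotone $\psi$. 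So step (iii) of your plan, as formulated, would fail.

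The missing idea is the reduction to the coprime set
$$
\cA'_{n,m}(\psi) := \bigl\{\vv X : |\vv q\vv X+\vv p|<\psi(|\vv q|)\text{ for infinitely many }(\vv p,\vv q)\text{ with }\gcd(\vv p,\vv q)=1\bigr\}\subset\cA_{n,m}(\psi),
$$
and to the corresponding sets $B'(\vv q,\delta)$. For these, one computes exactly $|B'(\vv q,\delta)|=(2\delta)^m\prod_{p\mid\gcd(\vv q)}(1-p^{-m})$ by M\"obius inversion, and the pairwise overlap estimate becomes clean: for non-parallel $\vv q_1,\vv q_2$ one has genuine independence (Sprind\v{z}uk's lemma on $B(\vv q,\delta)$), while for parallel $\vv q_1=k_1\vv q$, $\vv q_2=k_2\vv q$ with $\gcd(\vv q)=1$ the measure-preserving map $\vv X\mapsto \vv q\vv X\bmod 1$ projects the problem to $n=1$, where Gallagher's (or Sprind\v{z}uk's) one-dimensional bound gives $|B'(\vv q_1,\delta_1)\cap B'(\vv q_2,\delta_2)|\ll(\delta_1\delta_2)^m$. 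With these two ingredients the quasi-independence on average follows by a direct summation (the only place any arithmetic enters is in showing $\sum_{|\vv q|\le N}|B'_{\vv q}(\psi)|\asymp\sum_{h\le N}h^{n-1}\psi(h)^m$, which when $m=1$, $n=2$ requires a small divisor-sum computation involving $\varphi$). A minor further point: you need not specialise to $n=2$ at all; once the coprime framework is in place the argument is uniform in $n\ge1$ with $nm>1$, and there is no benefit in splitting off the known cases.
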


\noindent As already mentioned, Theorem \ref{t} is false when $m
n=1$ and the Catlin conjecture provides the appropriate statement:
\begin{equation*}
    |\cA_{1,1}(\psi)| =
      1
      \quad \text{if } \quad  \sum_{q=1}^\infty \varphi(q) \ \max_{t \geq 1}  \frac{\psi(qt)}{qt} =\infty
      .
 \end{equation*}


\noindent Here, and throughout, $\varphi$ is  the Euler function.
For further details concerning the above mentioned classical
results and the generalisations of the Catlin and Duffin-Schaeffer
conjectures to linear forms see
\cite{Beresnevich-Bernik-Dodson-Velani-Roth}. Indeed, Theorem
\ref{t} is formally stated as Conjecture A in
\cite{Beresnevich-Bernik-Dodson-Velani-Roth} and is shown to be
equivalent to the linear forms Catlin conjecture.

We shall prove  Theorem \ref{t} by establishing the analogous
statement for an important subset of $\cA_{n,m}(\psi)$. Given two
integer points $\vv p=(p_1,\dots,p_m)\in\Z^m$ and $\vv
q=(q_1,\dots,q_n)\in\Z^n$, let $\gcd(\vv p,\vv q)$ denote the
greatest common divisor of $p_1,\dots,p_m,q_1,\dots,q_n$. We say
that $\vv p$ and $\vv q$ are coprime if $\gcd(\vv p,\vv q)=1$.
Consider the set
\begin{equation*}
\cA'_{n,m}(\psi) := \{\vv X\in \I^{nm}:|\q \vv X +\vv p| < \psi(|\q|)
\begin{array}[t]{l}
  {\text { for infinitely many }} (\vv p,\vv q)\in\Z^m \times \Z^n\nz \\[1ex]
  \text{ with }\gcd(\vv p,\vv q)=1 \}.
\end{array}
\end{equation*}
In view of the coprimeness condition,
 we clearly have that  $
\cA'_{n,m}(\psi)\subset\cA_{n,m}(\psi) $  and  so  Theorem \ref{t}
is a consequence of the following theorem.
\begin{theorem}\label{t1}
Let $\psi:\N\to\R^+$ and $nm>1$. Then
  \begin{equation*}
    |\cA'_{n,m}(\psi)| =
      1
      \quad \text{if } \quad  \sum_{q=1}^\infty q^{n-1}\psi(q)^m=\infty .
 \end{equation*}
\end{theorem}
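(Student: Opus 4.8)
The plan is to realise $\cA'_{n,m}(\psi)$, or rather a large subset of it, as a $\limsup$ set built from a well-distributed family of balls and then apply a zero–one law / divergence Borel–Cantelli framework. Concretely, for each $\vv q\in\Z^n\nz$ and each coprime $\vv p\in\Z^m$ the set of $\vv X$ satisfying $|\vv q\vv X+\vv p|<\psi(|\vv q|)$ is a union of $m$-codimensional "slabs"; intersecting over the $m$ linear forms gives a box whose $nm$-dimensional measure is comparable to $(\psi(|\vv q|)/|\vv q|)^m$ times the number of admissible $\vv p$, which for fixed $\vv q$ is $\asymp |\vv q|^m$. Summing the local measures over a dyadic block $2^{t}\le|\vv q|<2^{t+1}$ reproduces exactly the tail of the series $\sum q^{n-1}\psi(q)^m$, so the divergence hypothesis gives divergence of the total measure. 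The first step, then, is to fix this geometric description and the counting of coprime $\vv p$'s, controlling overlaps between slabs coming from the same $\vv q$.

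The heart of the matter is a quasi-independence (second-moment) estimate: one must show that for $\vv q\ne\vv q'$ the corresponding sets overlap no more than expected on average, i.e. a bound of the shape $\sum_{\vv q,\vv q'}|E_{\vv q}\cap E_{\vv q'}|\ll (\sum_{\vv q}|E_{\vv q}|)^2$ up to lower-order terms, after which the divergence Borel–Cantelli lemma (in the form that yields positive measure) together with a zero–one law (the set is invariant under a suitable transitive group action, or one invokes Gallagher's ergodic $0$–$1$ law for $\cA'$) upgrades positive measure to full measure. It is precisely here that $n=2$ is the delicate case: when $n\ge 3$ the relevant exponential sums / lattice-point counts decay fast enough for Schmidt's or Sprind\v zuk's arguments to go through directly, and when $n=1$, $m\ge2$ Gallagher's multi-dimensional trick works; for $n=2$ one has to be more careful, and I would estimate $|E_{\vv q}\cap E_{\vv q'}|$ by splitting according to $\gcd(\vv q,\vv q')$ and the size of the sublattice of $\vv p$'s that can simultaneously solve both inequalities, exploiting the coprimeness condition $\gcd(\vv p,\vv q)=1$ to rule out the diagonal "resonant" contributions that would otherwise destroy the variance bound.

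The main obstacle I expect is therefore controlling the "near-diagonal" terms $\vv q\approx\vv q'$ in the second moment: two balls can be forced to coincide or nest when $\vv q'$ is a near-rational multiple of $\vv q$, and without monotonicity one cannot average these away crudely. The remedy is to pass to $\cA'_{n,m}(\psi)$ precisely because the coprimeness restriction thins out the family of $\vv p$ enough — for a fixed direction of $\vv q$ the admissible $\vv p$ lie in a union of cosets of a sublattice of index $\asymp\gcd$, and summing the resulting overlap bounds over dyadic ranges of $|\vv q|$ and over $\gcd$ values gives the required $\ll(\text{sum})^2$ control, with the $n=2$ case needing the sharpest (essentially best-possible) version of this counting. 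Once the variance bound is in hand, the divergence Borel–Cantelli lemma gives $|\cA'_{n,m}(\psi)|>0$, and a zero–one law promotes this to $|\cA'_{n,m}(\psi)|=1$, which by $\cA'_{n,m}(\psi)\subset\cA_{n,m}(\psi)$ also yields Theorem \ref{t}.
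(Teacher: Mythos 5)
Your proposal matches the paper's strategy: realise $\cA'_{n,m}(\psi)$ as the $\limsup$ of the sets $B'(\vv q,\psi(|\vv q|))$, prove quasi-independence on average (a second-moment bound over $|\vv q_1|,|\vv q_2|\le N$) and apply the divergence Borel--Cantelli lemma to obtain positive measure, then invoke a zero--one law (the paper quotes one from Beresnevich--Velani rather than Gallagher) to upgrade to full measure, with the coprimeness condition $\gcd(\vv p,\vv q)=1$ being exactly what makes the variance bound go through. The one place your sketch drifts from the paper's actual argument is the treatment of the delicate parallel pairs $\vv q_1\parallel\vv q_2$: rather than splitting by a $\gcd$ of $\vv q$'s and counting sublattices of admissible $\vv p$'s, the paper pushes forward under the measure-preserving map $T_{\vv q}:\vv X\mapsto\vv q\vv X\bmod 1$ to reduce that case to the known $n=1$ intersection estimate of Gallagher and Sprind\v{z}uk, which is the decisive technical device.
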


\noindent As with Theorem \ref{t}, for $ n=1$  the statement of
Theorem \ref{t1}  is due to Gallagher. For $n \geq 3 $ it can be
derived from Schmidt's \cite[Theorem~2]{Schmidt-1960} or
Sprind\v{z}uk's \cite[\S\,I.5, Theorem
15]{Sprindzuk-1979-Metrical-theory}. Furthermore, when $mn=1$ the
Duffin-Schaeffer conjecture provides the appropriate statement:
\begin{equation*}
    |\cA'_{1,1}(\psi)| =
      1
      \quad \text{if } \quad  \sum_{q=1}^\infty \varphi(q) \ \frac{\psi(q)}{q} =\infty .
 \end{equation*}

\vspace*{2ex}

\noindent The proof of Theorem \ref{t1} presented in this paper is
self-contained. In other words, there is little advantage in
restricting the proof to the `unknown' $n=2$ case.
The key to establishing  the theorem is showing that the sets associated with
the natural $\limsup$ decomposition of $\cA'_{n,m}(\psi)$ are
quasi-independent on average --- see Theorem \ref{tqia} below. To the best of our knowledge,
such an independence result is unavoidable when proving positive measure results for $\limsup$ sets. More to the point, the analogue of Theorem \ref{tqia} associated with  the set $\cA_{n,m}(\psi)$ is probably not in general true and it is absolutely necessary to work with the `thinner' set $\cA'_{n,m}(\psi)$.  In particular,  this would explain why Theorem \ref{t} is not in general covered by the result of Schmidt.  Given the nature of his goal, Schmidt was essentially forced to work directly with $\cA_{n,m}(\psi)$.

\vspace*{2ex}

Beyond the above statements, in  \S\ref{multisec}  we discuss the generalizations of Theorems \ref{t} and \ref{t1} within the framework of multivariable approximating functions $\Psi:\Z^n\to\Rp$.  In the final section \S\ref{asypsec}, we discuss the quantitative theory and show that  Theorem \ref{t} can not be deduced from Schmidt's  quantitative theorem.

\section{Preliminaries}

In this section we reduce the proof of  Theorem \ref{t1} to
establishing a quasi-independence on average statement --
Theorem~\ref{tqia} below. We also state various known results that
we appeal to during the course of proving Theorem \ref{tqia}.

We start with an almost trivial  but nevertheless useful
observation.  In Theorem \ref{t1}, there is no loss of generality
in assuming that
\begin{equation}\label{triv}
\psi(h) <   c  \qquad {\rm for \ all \ } h \in \N {\rm  \ and \ }
c
> 0 \ .
\end{equation}
Suppose for the moment that this was not the case and define
$$\Psi : h \to \Psi(h) :=  \min \left\{  c, \psi(h)  \right\}  \
.$$ It is easily verified that if  $\sum h^{n-1}\psi(h)^m $
diverges then $\sum h^{n-1}\Psi(h)^m$ diverges.  Furthermore,
$\cA'_{n,m}(\Psi)  \subset \cA'_{n,m}(\psi) $  and so it suffices
to establish Theorem \ref{t1} for $\Psi$.
\

The next statement is far from being trivial.   It is a
consequence of the main result in
\cite{Beresnevich-Velani-08-Acta-Arith} and reduces the proof of
Theorem \ref{t1} to showing that $ \cA'_{n,m}(\psi)$ is of
positive measure.

\begin{lemma}\label{l5}
For any $n$, $m \geq 1$ and $\psi:\N\to\R^+$,
$$
|\cA'_{n,m}(\psi)|>0\qquad\Longrightarrow\qquad
|\cA'_{n,m}(\psi)|=1.
$$
\end{lemma}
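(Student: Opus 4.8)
This statement is a zero--full law, and it follows from the main result of \cite{Beresnevich-Velani-08-Acta-Arith}, which is a zero--full law of exactly this type; I outline the argument behind it. For $\vv q\in\Z^n\nz$ put $B(\vv p,\vv q):=\{\vv X\in\I^{nm}:|\vv q\vv X+\vv p|<\psi(|\vv q|)\}$ and $E_{\vv q}:=\bigcup_{\gcd(\vv p,\vv q)=1}B(\vv p,\vv q)$. Only finitely many $\vv p$ give a box meeting $\I^{nm}$, so
\[
\cA'_{n,m}(\psi)=\limsup_{|\vv q|\to\infty}E_{\vv q}
:=\bigcap_{N\ge1}\ \bigcup_{|\vv q|\ge N}E_{\vv q}.
\]
Replacing $\vv p$ by $\vv p+\vv q\vv Z$ with $\vv Z\in\Z^{nm}$ changes neither $|\vv q\vv X+\vv p|<\psi(|\vv q|)$ nor $\gcd(\vv p,\vv q)=1$, so each $E_{\vv q}$, and hence $\cA'_{n,m}(\psi)$, is $\Z^{nm}$-periodic and descends to the torus $\T^{nm}$; it suffices to prove the zero--full law there.

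The plan is to exclude $0<|\cA'_{n,m}(\psi)|<1$ by a Lebesgue density argument. If $|\cA'_{n,m}(\psi)|>0$ then $\sum_{\vv q\in\Z^n\nz}|E_{\vv q}|=\infty$, since otherwise the convergence Borel--Cantelli lemma would give $|\limsup E_{\vv q}|=0$. Granted this divergence, I would show that $|\cA'_{n,m}(\psi)\cap B|\ge c_0|B|$ for \emph{every} ball $B\subset\T^{nm}$, with $c_0>0$ absolute. By the Lebesgue density theorem this is incompatible with $|\cA'_{n,m}(\psi)|<1$: almost every point of the (then non-null) complement is a density point of the complement, hence has density $0$ in $\cA'_{n,m}(\psi)$, contradicting the uniform lower bound $c_0>0$. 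To get that bound one runs a localised divergence Borel--Cantelli (Chung--Erd\H{o}s) argument inside $B$, needing two inputs. First, a \emph{local ubiquity} estimate: for each $B$ there is $N_0(B)$ and an absolute $\kappa>0$ with
\[
\sum_{N\le|\vv q|<2N}|B\cap E_{\vv q}|\ \ge\ \kappa\,|B|\sum_{N\le|\vv q|<2N}|E_{\vv q}|\qquad(N\ge N_0(B)),
\]
which follows from the equidistribution of the primitive points $\vv p/\vv q$ summed over a dyadic block of $\vv q$ (no overlaps enter this sum, so non-monotonicity of $\psi$ is harmless); together with the divergence above this gives $\sum_{\vv q}|B\cap E_{\vv q}|=\infty$. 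Second, a \emph{quasi-independence on average} estimate for the overlaps,
\[
\sum_{|\vv q|,|\vv q'|<Q}\ |B\cap E_{\vv q}\cap E_{\vv q'}|\ \ll\ |B|^{-1}\Big(\sum_{|\vv q|<Q}|B\cap E_{\vv q}|\Big)^{2},
\]
with the implied constant absolute; feeding both into Chung--Erd\H{o}s produces $|\cA'_{n,m}(\psi)\cap B|\gg|B|$, hence $|\cA'_{n,m}(\psi)|=1$.

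The main obstacle is the overlap bound $\sum|B\cap E_{\vv q}\cap E_{\vv q'}|\ll|B|^{-1}(\cdots)^{2}$: estimating $|B(\vv p,\vv q)\cap B(\vv p',\vv q')|$ amounts to controlling when two systems of linear forms with coprime coefficient data are simultaneously small, and then summing over all primitive $(\vv p,\vv q),(\vv p',\vv q')$ with $|\vv q|,|\vv q'|<Q$ against the square of the first moment. This is a weaker relative of Theorem~\ref{tqia}: here the divergence sum runs over \emph{all} $\vv q$, so one does not need the full robustness required for Theorem~\ref{t1} (where the divergence of $\sum q^{n-1}\psi(q)^m$ may be carried by a sparse set of $q$, and the coprimality condition defining $\cA'_{n,m}(\psi)$ is genuinely used to keep the overlaps under control), and the estimates already available in \cite{Beresnevich-Velani-08-Acta-Arith} suffice. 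The remaining points --- finiteness of the boxes inside $\I^{nm}$, the passage to the torus, and the extraction of a density point --- are routine.
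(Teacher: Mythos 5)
You correctly identify that Lemma~\ref{l5} is exactly the zero--full law from \cite{Beresnevich-Velani-08-Acta-Arith}; that citation is all the paper itself offers, so on this point you match it precisely. The problem lies in your attempted reconstruction of what that reference does. The zero--full law there is obtained in the spirit of Cassels and Gallagher, via an invariance/inflation argument: one shows that $|\cA'_{n,m}(\psi)|$ is unchanged under scaling $\psi$ by positive constants (equivalently, under a natural family of transformations of $\I^{nm}$), and a Lebesgue density argument then forces the measure to be $0$ or $1$. No second-moment or quasi-independence estimates enter at all. You instead propose a localised Chung--Erd\H{o}s argument, whose engine is a \emph{local} quasi-independence bound
$\sum_{|\vv q|,|\vv q'|<Q}|B\cap E_{\vv q}\cap E_{\vv q'}|\ll |B|^{-1}\bigl(\sum_{|\vv q|<Q}|B\cap E_{\vv q}|\bigr)^{2}$
uniformly over balls $B$ with an absolute constant. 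This estimate is asserted, not proved, and the claim that it is ``already available'' in \cite{Beresnevich-Velani-08-Acta-Arith} is not borne out by that paper, which as noted argues by a different mechanism.

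Beyond misattributing the method, the gap is substantive and also undercuts the logic of the reduction. The whole purpose of invoking Lemma~\ref{l5} is to reduce Theorem~\ref{t1} to a \emph{positive}-measure statement so that a single global quasi-independence bound (Theorem~\ref{tqia}) suffices. Your route to Lemma~\ref{l5} would require a localised analogue of exactly that bound, uniformly over all balls, which is at least as hard and would make the reduction circular or redundant. You also gloss over the contribution of the finitely many $\vv q$ with $|\vv q|<N_0(B)$ (below the threshold where your local ubiquity estimate applies): these must be absorbed into both sides of the Chung--Erd\H{o}s quotient and you do not do so. Finally, the remark that the divergence ``runs over all $\vv q$'' and so is less delicate than in Theorem~\ref{t1} is not a genuine simplification --- by Lemma~\ref{lem10} the two divergence conditions are equivalent, and $\psi$ can still be supported on a sparse set of moduli. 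In short: the opening citation is correct and is the paper's proof; the outline that follows it is a different argument with an unproved central step.
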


\noindent In order to prove positive measure, we make use of the
following natural representation of  $ \cA'_{n,m}(\psi) $  as  a $\limsup$
set. Given $\delta>0$ and $\vv q\in\Z^n\nz$, let
$$
B(\vv q,\delta):=\{\vv X\in\I^{nm}:|\vv q\vv X+\vv p|<\delta\text{
for some }\vv p\in\Z^m\}
$$
and
$$
B'(\vv q,\delta):=\{\vv X\in\I^{nm}:|\vv q\vv X+\vv p|<\delta
\text{ for some }\vv p\in\Z^m\text{ with }\gcd(\vv p,\vv q)=1\}\,.
$$
Then, it is easily seen that
$$
\cA'_{n,m}(\psi)\, = \, \limsup_{|\vv q|\to\infty}B'(\vv
q,\psi(|\vv q|)).
$$

\noindent The following lemma provides a mechanism for
establishing lower bounds for the measure of $\limsup$ sets. The
statement is a generalisation of the divergent part of the
standard Borel-Cantelli lemma in probability theory, see
\cite[Lemma 5]{Sprindzuk-1979-Metrical-theory}. It is conveniently
adapted for the setup above.

\begin{lemma}\label{l4}
Let $E_k \subset \I^{nm}$ be a sequence of measurable sets such that
$\sum_{k=1}^\infty  |E_k|=\infty $. Then
\begin{equation}\label{e:005}
| \limsup_{k \to \infty} E_k | \; \geq \;  \limsup_{N \to \infty}
\frac{ \left( \sum_{s=1}^{N} |E_s| \right)^2 }{ \sum_{s, t =
1}^{N} |E_s \cap E_t | }  \  \  \ .
\end{equation}

\end{lemma}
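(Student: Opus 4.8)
The plan is to prove Lemma~\ref{l4}, the divergence Borel--Cantelli lemma with a quasi-independence correction term. Set $S_N := \sum_{s=1}^N |E_s|$ and $f_N := \sum_{s=1}^N \chi_{E_s}$, where $\chi_E$ denotes the indicator of $E$. By hypothesis $S_N = \int_{\I^{nm}} f_N \to \infty$. The set $\limsup_{k\to\infty} E_k$ is precisely the set of points $\vv X$ for which $f_N(\vv X) \to \infty$, so in particular it contains the event $\{\vv X : \limsup_N f_N(\vv X) = \infty\}$ up to the same set; the strategy is to bound the measure of the set where $f_N$ is "frequently large" from below using a second-moment (Paley--Zygmund type) argument and then pass to the $\limsup$.

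First I would fix $N$ and apply the Cauchy--Schwarz inequality on the measure space $\I^{nm}$ to the product $f_N \cdot \chi_{\{f_N > 0\}}$. This gives
\begin{equation*}
S_N^2 = \left( \int f_N \, \chi_{\{f_N>0\}} \right)^2 \le \int f_N^2 \cdot \int \chi_{\{f_N > 0\}} = \left( \sum_{s,t=1}^N |E_s \cap E_t| \right) \cdot |A_N|,
\end{equation*}
where $A_N := \{\vv X \in \I^{nm} : f_N(\vv X) > 0\} = \bigcup_{s=1}^N E_s$. Rearranging yields the key bound
\begin{equation*}
\Big| \bigcup_{s=1}^N E_s \Big| \; \ge \; \frac{S_N^2}{\sum_{s,t=1}^N |E_s \cap E_t|}.
\end{equation*}
This already isolates the arithmetic content on the right-hand side; the remaining work is purely measure-theoretic.

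Next I would convert this lower bound on the measure of a finite union into a lower bound on the measure of the $\limsup$. Let $U_k := \bigcup_{s \ge k} E_s$, so that $\limsup_{s} E_s = \bigcap_{k\ge 1} U_k$, a decreasing intersection, and hence $|\limsup_s E_s| = \lim_{k\to\infty} |U_k| = \inf_{k} |U_k|$. For each fixed $k$ and each $N > k$, the displayed bound applied to the shifted family $E_k, E_{k+1}, \dots, E_N$ gives
\begin{equation*}
|U_k| \; \ge \; \Big| \bigcup_{s=k}^N E_s \Big| \; \ge \; \frac{\big(\sum_{s=k}^N |E_s|\big)^2}{\sum_{s,t=k}^N |E_s\cap E_t|}.
\end{equation*}
Since $\sum_s |E_s| = \infty$, for any fixed target the numerator $\sum_{s=k}^N|E_s|$ can be made as large as we like by taking $N$ large, so the effect of discarding the first $k-1$ terms is negligible in the ratio: more precisely, $\big(\sum_{s=k}^N|E_s|\big)^2 = \big(S_N - S_{k-1}\big)^2$ and $\sum_{s,t=k}^N|E_s\cap E_t| \le \sum_{s,t=1}^N|E_s\cap E_t|$, so
\begin{equation*}
|U_k| \; \ge \; \limsup_{N\to\infty} \frac{(S_N - S_{k-1})^2}{\sum_{s,t=1}^N |E_s\cap E_t|} \; = \; \limsup_{N\to\infty} \frac{S_N^2}{\sum_{s,t=1}^N |E_s\cap E_t|},
\end{equation*}
the last equality because $S_{k-1}$ is a fixed constant while $S_N\to\infty$ (one checks $(S_N-S_{k-1})^2/S_N^2 \to 1$, and the denominator is unchanged). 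As this lower bound for $|U_k|$ is independent of $k$, taking the infimum over $k$ gives $|\limsup_s E_s| \ge \limsup_{N\to\infty} S_N^2 \big/ \sum_{s,t=1}^N|E_s\cap E_t|$, which is \eqref{e:005}.

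The only genuinely delicate point, and the one I would take care over, is the passage from the finite-union bound to the $\limsup$: one must make sure that removing an initial segment $E_1,\dots,E_{k-1}$ does not spoil the ratio, and this is exactly where the divergence hypothesis $\sum_s|E_s|=\infty$ is used (it guarantees $S_N - S_{k-1} \sim S_N$). Everything else — the Cauchy--Schwarz step and the monotone-limit identity $|\limsup E_s| = \lim_k|U_k|$ — is routine. I would also note that if the $\limsup$ on the right of \eqref{e:005} is zero the statement is vacuous, so one may assume it is positive, which is automatic in all our applications via Theorem~\ref{tqia}.
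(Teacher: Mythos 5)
Your proof is correct. The paper does not prove Lemma~\ref{l4}; it cites it directly as Lemma~5 of Sprind\v{z}uk's book, and the argument there is exactly the one you give: the Cauchy--Schwarz (second-moment) bound $\bigl|\bigcup_{s=1}^N E_s\bigr| \ge S_N^2 \big/ \sum_{s,t=1}^N |E_s\cap E_t|$, followed by applying it to the tail family $E_k,\dots,E_N$ and using divergence of $\sum|E_s|$ to absorb the initial segment, then passing to $|\limsup E_s| = \lim_k |U_k|$. All the delicate points you flag --- $\chi_{\{f_N>0\}}^2 = \chi_{\{f_N>0\}}$, enlarging the denominator from $\sum_{s,t\ge k}$ to $\sum_{s,t\ge 1}$, and the fact that $(S_N - S_{k-1})^2/S_N^2 \to 1$ so the $\limsup$ of the ratio is unaffected --- are handled correctly, so there is nothing to repair.
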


\noindent In view of Lemma~\ref{l4}, the  desired statement $
|\cA'_{n,m}(\psi)|>0 $ will follow on showing that the sets
$B'_{\vv q}(\psi):=B'(\vv q,\psi(|\vv q|))$   are {\rm
quasi-independent on average} and that the sum of their measures diverges. Formally, we  shall prove the
following statement.

\begin{theorem}[quasi-independence on average]\label{tqia}
Let  $nm>1$ and $\psi:\N\to\R^+$ satisfy  $\psi(h) < 1/2 $ for all $h \in \N$ and  $\sum_{h=1}^{\infty} h^{n-1}\psi(h)^m = \infty$. Then
\begin{equation}\label{divsum}
 \ds\sum_{\vv q  \in \Z^n\nz}  |B'_{\vv q}(\psi) | \ = \ \infty  \ ,
\end{equation}
and there exists
a constant $C
> 1$ such that for  $N$ sufficiently large,
\begin{equation}\label{qia}
 \ds\sum_{|\vv q_1|\le N,\ |\vv q_2|\le N}|B'_{\vv q_1}(\psi)
 \cap B'_{\vv q_2}(\psi)| \ \leq \ C \,
 \displaystyle \Big( \sum_{ |\vv q_1|\le N}|B'_{\vv q_1}(\psi)| \Big)^{2}   \ .
\end{equation}
\end{theorem}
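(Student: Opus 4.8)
The plan is to estimate the measure of a single set $B'_{\vv q}(\psi)$ and then the measure of pairwise intersections $B'_{\vv q_1}(\psi)\cap B'_{\vv q_2}(\psi)$, and to feed these estimates into the divergence sum and the quasi-independence inequality. First I would record the elementary fact that for $|\vv q|=h$ and $\delta<1/2$ one has $|B(\vv q,\delta)|\asymp \delta^m$, since $B(\vv q,\delta)$ is a union of $\asymp 1$ slabs (per coordinate direction $j$, the condition $|q_1 x_{1j}+\cdots+q_n x_{nj}+p_j|<\delta$ cuts out a union of parallel strips of total measure $\asymp\delta$ inside $[0,1]^n$, uniformly in $\vv q$). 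The harder part at the single-set level is the coprimeness restriction: one must show $|B'(\vv q,\delta)|\gg \delta^m$, i.e. that passing to $\gcd(\vv p,\vv q)=1$ does not destroy a positive proportion of the measure. This is where I would invoke a local density / counting argument (in the spirit of the Duffin–Schaeffer heuristic and Schmidt's work): for $n\ge 2$ the number of admissible $\vv p$ modulo $q$-type moduli is governed by a divisor sum over $d\mid\gcd$, and a standard Möbius/Euler-function computation gives $|B'(\vv q,\delta)|\gg \delta^m$ with an implied constant that, crucially, does \emph{not} decay with $h$ — this uses $n\ge 1$ together with $nm>1$ so that the relevant series $\sum_{d} d^{-\text{(something}\ge 2)}$ converges. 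Summing $|B'_{\vv q}(\psi)|\gg\psi(h)^m$ over the $\asymp h^{n-1}$ vectors $\vv q$ with $|\vv q|=h$ then yields $\sum_{\vv q}|B'_{\vv q}(\psi)|\gg\sum_h h^{n-1}\psi(h)^m=\infty$, which is \eqref{divsum}.

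For the pair correlation estimate \eqref{qia}, I would split the double sum over $|\vv q_1|,|\vv q_2|\le N$ into the diagonal-type terms where $\vv q_1$ and $\vv q_2$ are linearly dependent (in particular $\vv q_1=\pm\vv q_2$, plus scalar multiples) and the generic terms where $\vv q_1,\vv q_2$ are linearly independent. The dependent contribution is controlled directly: for $\vv q_1=\vv q_2$ it is $\sum_{|\vv q|\le N}|B'_{\vv q}(\psi)|$, which is $o\big((\sum|B'_{\vv q_1}(\psi)|)^2\big)$ once the total sum is large, and scalar multiples $\vv q_2=t\vv q_1$ contribute an extra harmless factor from summing $\psi(t|\vv q_1|)^m$ against $\psi(|\vv q_1|)^m$ — here one uses the bound $\psi<1/2$ and reorganizes by the "primitive" direction. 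For linearly independent $\vv q_1,\vv q_2$ the key geometric input is that $B(\vv q_1,\delta_1)\cap B(\vv q_2,\delta_2)$ is cut out by $2m$ genuinely transversal slab-conditions, so that its measure factorizes: $|B(\vv q_1,\delta_1)\cap B(\vv q_2,\delta_2)|\ll \delta_1^m\delta_2^m$, uniformly in the pair, because the $2\times n$ matrix with rows $\vv q_1,\vv q_2$ has a nonzero $2\times 2$ minor and one integrates over the cube coordinate by coordinate. (One must be slightly careful when $\min(|q_1|,\dots,|q_n|)$ or the minors are small; those degenerate pairs are few enough — $O(N^{n-1})$ of them — to absorb into the dependent bucket, or handled by a cruder volume estimate.) Combined with the lower bound $|B'_{\vv q_i}(\psi)|\gg\psi(|\vv q_i|)^m$ from the first step, the independent contribution is $\ll(\sum_{|\vv q|\le N}\psi(|\vv q|)^m)^2\ll(\sum_{|\vv q|\le N}|B'_{\vv q}(\psi)|)^2$, which together with the dependent estimate gives \eqref{qia} with an absolute constant $C$.

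The main obstacle I anticipate is the single-vector lower bound $|B'(\vv q,\delta)|\gg\delta^m$ with a constant uniform in $\vv q$ (equivalently, controlling the "loss" from the coprimeness condition $\gcd(\vv p,\vv q)=1$), and the closely related issue of making the transversality/factorization estimate for intersections $B'(\vv q_1,\delta_1)\cap B'(\vv q_2,\delta_2)$ genuinely uniform over all linearly independent pairs — this is precisely the point where the exclusion of the $mn=1$ case enters, since there the analogous local densities behave like $\varphi(q)/q$ rather than being bounded below, and it is why one is forced to work with $\cA'_{n,m}$ rather than $\cA_{n,m}$. I would handle this via a careful sieve over the divisors $d\mid\vv q$, estimating for each $d$ the measure of the set where $d\mid(\vv q\vv X+\vv p)$ admits a solution and using inclusion–exclusion; the convergence of the resulting arithmetic series (which needs $n\ge 1$ and the total dimension $nm>1$) is what delivers a positive, $h$-independent proportion. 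Everything else — the reduction via Lemmas \ref{l4} and \ref{l5}, and the final bookkeeping — is routine once these two uniform estimates are in hand.
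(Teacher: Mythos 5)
Your overall plan mirrors the paper's: estimate $|B'_{\vv q}(\psi)|$, estimate $|B'_{\vv q_1}(\psi)\cap B'_{\vv q_2}(\psi)|$ by splitting into parallel/non-parallel pairs, then feed the two estimates into the Borel--Cantelli inequality of Lemma~\ref{l4}. The non-parallel intersection bound via transversality is correct (it is Lemma~\ref{l2}, giving exact pairwise independence $|B\cap B|=|B|\cdot|B|$), and routing the parallel case through a reduction to $n=1$ is also what the paper does.

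However, there is a genuine gap in the first step. You claim the uniform lower bound $|B'(\vv q,\delta)|\gg\delta^m$ with an implied constant independent of $\vv q$, arguing that the divisor sum converges because $nm>1$. This is \emph{false} when $m=1$. The exact formula (Lemma~\ref{nbig1}) gives
$|B'(\vv q,\delta)|=(2\delta)^m\prod_{p\mid d}(1-p^{-m})$
with $d=\gcd(\vv q)$; for $m\geq 2$ the product is indeed bounded below by $6/\pi^2$, but for $m=1$ it equals $\varphi(d)/d$, which can be as small as $O(1/\log\log d)$ along $\vv q$ with highly composite gcd. So there is no pointwise lower bound $|B'_{\vv q}(\psi)|\gg\psi(|\vv q|)$ uniform in $\vv q$, and the step ``sum the lower bound over the $\asymp h^{n-1}$ vectors with $|\vv q|=h$'' does not go through. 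The condition $nm>1$ forces $n\geq 2$ when $m=1$, which is exactly the hard case (and for $n=2$, $m=1$ the previously open one), so this is not a corner you can dismiss. The paper resolves it not pointwise but on average: Lemma~\ref{lem10} establishes $\sum_{|\vv q|\leq N}|B'_{\vv q}(\psi)|\asymp\sum_{h\leq N}h^{n-1}\psi(h)^m$ by grouping $\vv q$ with $|\vv q|=h$ according to $d=\gcd(\vv q)$, invoking the primitive-vector count of Lemma~\ref{l3}, and evaluating the resulting Dirichlet convolution ($\sum_{d\mid h}\varphi(d)\varphi(h/d)/d$ for $n=2$, which collapses to $h\prod_{p\mid h}(1-p^{-2})$). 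That cancellation is the technical heart of the theorem and cannot be replaced by the pointwise bound you assert. The same gap infects your final bookkeeping, where you again use $|B'_{\vv q_i}(\psi)|\gg\psi(|\vv q_i|)^m$ to convert $(\sum\psi(|\vv q|)^m)^2$ into $(\sum|B'_{\vv q}(\psi)|)^2$; the paper instead compares both sides to the scalar sum $\sum_h h^{n-1}\psi(h)^m$ via Lemma~\ref{lem10}.

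A secondary soft spot: for parallel pairs $\vv q_2=t\vv q_1$ you wave at ``a harmless extra factor,'' but the needed bound $|B'(\vv q_1,\delta_1)\cap B'(\vv q_2,\delta_2)|\ll(\delta_1\delta_2)^m$ for parallel non-equal vectors is not elementary; the paper obtains it by pushing forward under the measure-preserving map $T_{\vv q}$ and invoking the known $n=1$ estimates of Gallagher/Sprind\v{z}uk (Lemma~\ref{lem9-}). That reduction should be made explicit rather than treated as routine.
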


\noindent  The upshot of the above discussion is that
$$
{\rm Theorem \ \ref{tqia}    \qquad  \Longrightarrow  \qquad
Theorem  \ \ref{t1}  }  \ .
$$

\noindent In order to establish the quasi-independence on average
statement, we will make use of  the following results concerning
the sets $B(\vv q,\delta)$.

\begin{lemma}\label{l2}
Let $n$, $m \geq 1$ and  let $\vv q_1,\vv q_2\in \Z^n\nz$ and
$\delta_1,\delta_2\in(0,1/2)$. Then
\begin{equation}\label{add3}
|B(\vv q_1,\delta_1)|=(2\delta_1)^m\,
\end{equation}
 and
\begin{equation}\label{add2}
|B(\vv q_1,\delta_1)\cap B(\vv q_2,\delta_2)|=|B(\vv
q_1,\delta_1)|\cdot|B(\vv q_2,\delta_2)|\,  \ \quad\text{ if\/    \ \
$\vv q_1\nparallel\vv q_2$}.
\end{equation}
\end{lemma}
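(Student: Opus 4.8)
The plan is to reduce everything to a statement about a single linear form (and a single pair of forms) using the product structure of $B(\vv q,\delta)$ over the $m$ columns of $\vv X$, and then to invoke the fact that a surjective continuous homomorphism of compact abelian groups pushes Haar (= Lebesgue) measure to Haar measure. Write $\vv X=(\vv x^{(1)},\dots,\vv x^{(m)})$, where $\vv x^{(j)}=(x_{1j},\dots,x_{nj})\in\I^n$ is the $j$-th column, so that the $j$-th coordinate of $\vv q\vv X+\vv p$ is $\vv q\cdot\vv x^{(j)}+p_j$. Since $|\cdot|$ is the supremum norm and the entries $p_j$ of $\vv p$ may be chosen independently, the condition defining $B(\vv q,\delta)$ is equivalent to: $\dist(\vv q\cdot\vv x^{(j)},\Z)<\delta$ for every $j$. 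Hence, setting $S(\vv q,\delta):=\{\vv x\in\I^n:\dist(\vv q\cdot\vv x,\Z)<\delta\}$ and identifying $\I^{nm}$ with $(\I^n)^m$ in the obvious measure-preserving way, we get $B(\vv q,\delta)=S(\vv q,\delta)^m$ and $B(\vv q_1,\delta_1)\cap B(\vv q_2,\delta_2)=\big(S(\vv q_1,\delta_1)\cap S(\vv q_2,\delta_2)\big)^m$. By Fubini's theorem it therefore suffices to prove $|S(\vv q_1,\delta_1)|=2\delta_1$ and, when $\vv q_1\nparallel\vv q_2$, $|S(\vv q_1,\delta_1)\cap S(\vv q_2,\delta_2)|=(2\delta_1)(2\delta_2)$, and then raise to the $m$-th power.

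For the first identity, identify $\I^n$ with the torus $\T^n=\R^n/\Z^n$ and consider $\vv x\mapsto\vv q\cdot\vv x\bmod 1$, a continuous homomorphism $\T^n\to\T$ that is surjective because $\vv q\neq\vv 0$. The pushforward of Haar measure under any surjective continuous homomorphism of compact groups is a translation-invariant probability measure on the target, hence is Haar measure there. So $|S(\vv q_1,\delta_1)|$ equals the Haar measure of the image of $(-\delta_1,\delta_1)$ in $\T$; since $\delta_1<1/2$ this arc does not overlap itself and has measure exactly $2\delta_1$. Raising to the $m$-th power gives \eqref{add3}.

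For the second identity, suppose $\vv q_1\nparallel\vv q_2$; this forces $n\geq2$ and means $\vv q_1,\vv q_2$ are linearly independent over $\Q$, so the integer $2\times n$ matrix with rows $\vv q_1,\vv q_2$ has rank $2$. Consequently $\vv x\mapsto(\vv q_1\cdot\vv x,\vv q_2\cdot\vv x)\bmod\Z^2$ is a continuous homomorphism $\T^n\to\T^2$ whose image is a closed connected subgroup of dimension $2$, i.e. all of $\T^2$; as before it carries Haar measure on $\T^n$ to Haar measure on $\T^2$. Since $\delta_1,\delta_2<1/2$, the box $(-\delta_1,\delta_1)\times(-\delta_2,\delta_2)$ embeds in $\T^2$ with measure $(2\delta_1)(2\delta_2)$, so $|S(\vv q_1,\delta_1)\cap S(\vv q_2,\delta_2)|=(2\delta_1)(2\delta_2)$; raising to the $m$-th power and comparing with \eqref{add3} yields \eqref{add2}. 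The only delicate points are the verification that a surjective homomorphism of compact abelian groups transports Haar to Haar (immediate from translation-invariance of the pushforward) and the use of $\delta_i<1/2$ to ensure the relevant boxes in $\T$ and $\T^2$ are embedded, so their measures are exactly $2\delta_i$ rather than smaller; everything else is routine. (A more hands-on alternative to the homomorphism step is to change variables directly on the columns via a unimodular substitution for a single form, or a suitable integer substitution for a pair, reducing to integrating the indicator of a box, but the torus formulation is cleaner.)
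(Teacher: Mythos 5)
The paper does not actually give a proof of this lemma; it simply cites Lemmas~8 and~9 of Sprind\v{z}uk's book, so there is no internal argument to compare against. Your proof is correct, self-contained, and conceptually clean. The reduction $B(\vv q,\delta)\cong S(\vv q,\delta)^m$ via the columns of $\vv X$ is exact because $|\cdot|$ is the supremum norm and the coordinates of $\vv p$ vary independently. The maps $\T^n\to\T$ and $\T^n\to\T^2$ are genuine continuous homomorphisms (well-defined since $\vv q,\vv q_1,\vv q_2$ have integer entries), and surjectivity is correctly established: in the first case from $\vv q\neq\vv 0$, and in the second because non-parallel integer vectors are $\Q$-independent, so the $2\times n$ matrix has rank~$2$ and the induced linear map $\R^n\to\R^2$ is onto, whence so is the quotient map onto $\T^2$. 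The pushforward of Haar under a surjective continuous homomorphism of compact groups is Haar, and the hypothesis $\delta_i<1/2$ is invoked in exactly the right spot to ensure the arcs and the box downstairs are embedded, with measures $2\delta_i$ and $(2\delta_1)(2\delta_2)$ on the nose. What your route buys is transparency: the independence~\eqref{add2} is revealed as nothing more than the fact that the two-form homomorphism carries Lebesgue measure to a product measure on $\T^2$, where the event is literally a product set. The more hands-on alternative you mention at the end (a change of variables plus Fubini) is presumably closer to the spirit of Sprind\v{z}uk's cited lemmas; it trades the small amount of abstract harmonic analysis for an explicit computation, but is less illuminating as to \emph{why} the two events decouple.
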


\noindent The notation $\vv q_1\|\vv q_2$ means that $\vv q_1$ is
parallel to $\vv q_2$. The lemma is a consequence of Lemmas~8 and 9
in \cite{Sprindzuk-1979-Metrical-theory} and  implies that the
sets $B(\vv q_i,\delta_i)$, $i=1,2$ are pairwise independent for non-parallel
vectors. The following statement is an analogue of Lemma~\ref{l2}
for the sets $B'(\vv q,\delta)$ with $n=1$.

\begin{lemma}\label{lem9-}
Let $n=1$ and $ m \geq 1$. There is a constant $C>0$ such that for
$\delta_1,\delta_2\in(0,1/2)$ and any distinct  $q_1,q_2\in\N$
\begin{equation}\label{a2++}
  |B'(q_1,\delta_1)| = (2\delta_1)^m\prod_{p|q_1}(1- p^{-m})
  \vspace*{-2ex}
\end{equation}
 and
\begin{equation}\label{a2+}
  |B'(q_1,\delta_1)\cap B'(q_2,\delta_2)| \; \le  \; C \, (\delta_1 \, \delta_2)^m\,.
\end{equation}

\noindent The product in $(\ref{a2++})$ is  over prime divisors $p$ of
$q_1$ and is defined to be one if $q_1=1$.
\end{lemma}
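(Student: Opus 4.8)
The plan is to prove Lemma~\ref{lem9-} by a direct geometric analysis of the sets $B'(q,\delta)$ for $n=1$, where the vector $\vv q$ is just an integer $q\in\N$ and $\vv X\in\I^{m}$ is a point in the unit cube in $\R^m$. For such $q$, the set $B(q,\delta)$ is the union of the $\delta$-neighbourhoods (in sup-norm) of the rational hyperplane arrangement $\{\vv X:q x_j\in\Z\ \text{for all }j\}$, i.e. the points $\vv X$ all of whose coordinates are within $\delta/q$ of a multiple of $1/q$. To obtain $B'(q,\delta)$ one retains only those $\vv p=(p_1,\dots,p_m)$ with $\gcd(p_1,\dots,p_m,q)=1$.

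First I would establish \eqref{a2++}. The set $B(q,\delta)$ decomposes into $q^m$ disjoint axis-parallel cubes of side $2\delta/q$, one centred at each point $(\vv p + \text{shift})$; equivalently, inside $\I^m$ there are exactly $q^m$ choices of $\vv p\in\{0,\dots,q-1\}^m$ (with the usual boundary identification), each contributing a cube of volume $(2\delta/q)^m$, giving $|B(q,\delta)|=(2\delta)^m$ as in \eqref{add3}. For $B'(q,\delta)$ we must count the $\vv p\in(\Z/q\Z)^m$ with $\gcd(\vv p,q)=1$. By inclusion–exclusion over the prime divisors of $q$ (or directly, since the condition $\gcd(p_1,\dots,p_m,q)=1$ factors over primes $p\mid q$), the number of such residue vectors is $q^m\prod_{p\mid q}(1-p^{-m})$; multiplying by the cube volume $(2\delta/q)^m$ yields \eqref{a2++}.

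Next, the pairwise intersection bound \eqref{a2+}. Fix distinct $q_1,q_2\in\N$ and write $d=\gcd(q_1,q_2)$, $q_i=d r_i$ with $\gcd(r_1,r_2)=1$. A point $\vv X$ in $B'(q_1,\delta_1)\cap B'(q_2,\delta_2)$ has each coordinate $x_j$ simultaneously within $\delta_1/q_1$ of some $p_{1j}/q_1$ and within $\delta_2/q_2$ of some $p_{2j}/q_2$; since $\delta_1,\delta_2<1/2$ this forces $|p_{1j}/q_1 - p_{2j}/q_2|<1/(2q_1)+1/(2q_2)\le 1/(2\min q_i)\le 1/(2d)$ — wait, more carefully $|p_{1j}q_2-p_{2j}q_1|<\delta_1 r_2 + \delta_2 r_1 < \tfrac12(r_1+r_2)$, and this integer being small pins down the residue of $p_{1j}$ modulo $r_1$ (hence modulo $q_1/d$) to within $O(1)$ choices given $p_{2j}$. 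The coprimality $\gcd(p_{1j}\!:\!j,q_1)=1$ across all coordinates then, via the same prime factorisation argument as above, limits the admissible vectors $\vv p_1\bmod q_1$ to at most $C_1 q_1^m d^{-m}$ for an absolute constant $C_1=C_1(m)$ — actually one should argue that coprimality to the part of $q_1$ not dividing $q_2$ is essentially free, while coprimality to $d$ costs a factor bounded by... This is the crux: I must show the number of compatible pairs $(\vv p_1,\vv p_2)$ is $\ll (q_1 q_2)^m d^{-2m}\cdot d^{m}$ or so, and each contributes a box of volume $(2\delta_1/q_1)^m(2\delta_2/q_2)^m$ (the intersection of two such boxes, when nonempty, has volume at most the smaller one, which is $\le (2\min(\delta_1/q_1,\delta_2/q_2))^m$; summing geometrically this gives the stated $(\delta_1\delta_2)^m$ up to a constant).

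The main obstacle I anticipate is precisely the bookkeeping in \eqref{a2+}: controlling, uniformly in $q_1\ne q_2$ and in $\delta_1,\delta_2$, the number of coprime residue pairs whose associated boxes meet, and showing the gcd $d$ enters favourably (so that the sum telescopes to a $d$-independent constant). The convergence-sum flavour here — that one does \emph{not} pick up a factor $\log\min(q_1,q_2)$ or a divisor-function blow-up — is exactly why the coprimality restriction in $\cA'$ is essential, mirroring the role of $\varphi(q)$ in the Duffin–Schaeffer setup. I would isolate this as a counting sub-lemma over $(\Z/q_1\Z)^m\times(\Z/q_2\Z)^m$, prove it by reduction to the one-variable case $m=1$ combined with the multiplicativity over primes already used for \eqref{a2++}, and then assemble \eqref{a2+} by the volume estimate above; equations \eqref{add3}–\eqref{add2} of Lemma~\ref{l2} serve as the $n=1$ template (independence for non-parallel $q$, which here means distinct $q$ only after passing to $B'$, hence the loss from equality to the inequality \eqref{a2+}).
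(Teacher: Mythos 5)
The paper does not prove Lemma~\ref{lem9-} directly: it simply cites equation~(36) of Sprind\v{z}uk for the case $m=1$, and equations~(10) and Lemma~1 of Gallagher for $m\ge 2$ and for \eqref{a2++}. So your attempt to give a self-contained argument goes further than the paper, and this is a legitimate thing to try.

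Your proof of \eqref{a2++} is correct, and it is essentially the same inclusion--exclusion/M\"obius count of residue vectors $\vv p\in(\Z/q\Z)^m$ with $\gcd(\vv p,q)=1$ that the paper later carries out in full generality in Lemma~\ref{nbig1} (of which \eqref{a2++} is the $n=1$ specialisation). No issues here.

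The argument you sketch for \eqref{a2+}, however, has a genuine gap, and as written it cannot close. You aim to bound the number of ``compatible'' pairs $(\vv p_1,\vv p_2)$ by $\ll (q_1q_2/d)^m$ with $d=\gcd(q_1,q_2)$, and multiply by the per-pair intersection volume, which you correctly note is at most $\bigl(2\min(\delta_1/q_1,\delta_2/q_2)\bigr)^m$, not the product of the two box volumes. But then the product of your proposed count with the correct per-pair volume is $\ll (q_1q_2/d)^m\min(\delta_1/q_1,\delta_2/q_2)^m$, which (taking say $\delta_1/q_1\le\delta_2/q_2$) equals $(q_2/d)^m\delta_1^m=r_2^m\delta_1^m$; this is \emph{not} $\ll(\delta_1\delta_2)^m$ for small $\delta_2$. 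The count you need must itself depend on $\delta_1,\delta_2$: it should be $\ll(q_1\delta_2+q_2\delta_1)^m$, which together with the $\min$-volume does yield $(\delta_1\delta_2)^m$. Your sketch never establishes a $\delta$-dependent count, nor does it clearly explain where the coprimality enters.

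The clean way to make the argument work --- and the content your sketch is groping towards --- is a dichotomy on the size of $\delta_1 r_2+\delta_2 r_1$, where $q_i=dr_i$, $\gcd(r_1,r_2)=1$. Compatibility of a pair $(p_{1j},p_{2j})$ in coordinate $j$ forces the integer $v_j:=p_{1j}r_2-p_{2j}r_1$ to satisfy $|v_j|<\delta_1 r_2+\delta_2 r_1$. If $\delta_1 r_2+\delta_2 r_1\ge\tfrac12$, one simply drops the coprimality, counts $\asymp d(\delta_1 r_2+\delta_2 r_1)=\delta_1 q_2+\delta_2 q_1$ compatible pairs per coordinate, and multiplies by $\min(2\delta_1/q_1,2\delta_2/q_2)$ to get $\ll\delta_1\delta_2$ per coordinate, hence $\ll(\delta_1\delta_2)^m$ overall. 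If instead $\delta_1 r_2+\delta_2 r_1<\tfrac12$, then $v_j=0$ for every $j$, so $r_1\mid p_{1j}$ for all $j$, which together with $r_1\mid q_1$ forces $r_1\mid\gcd(\vv p_1,q_1)=1$, i.e. $r_1=1$; similarly $r_2=1$, whence $q_1=q_2=d$, contradicting $q_1\ne q_2$. So the intersection is empty in this regime. This is precisely the step at which the coprimality condition in $B'$ is \emph{essential} and where the naive bound on $B(q_1,\delta_1)\cap B(q_2,\delta_2)$ (which can be as large as $\asymp\delta^m$, e.g. $q_1=2,q_2=3$) would fail; your proposal correctly flags that coprimality matters but does not isolate this dichotomy, and its counting target is quantitatively wrong.
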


\noindent In the case $m=1$, the inequality given by  (\ref{a2+})
follows from equation~(36) in
\cite{Sprindzuk-1979-Metrical-theory}. In the case $m\ge 2$,  the
inequality   follows from equation~(10) in
\cite{Gallagher-65:MR0188154}.   Finally, the equality given by
(\ref{a2++}) is established within the proof of Lemma~1 in
\cite{Gallagher-65:MR0188154}. Note that when  $m \geq 2 $,  the
product term in (\ref{a2++}) is comparable to a constant and the
lemma implies that the sets $B'(q,\delta)$  are pairwise
quasi-independent.

\vspace{1ex}

We bring this section of preliminaries  to an end by stating a
counting result that can be found in
\cite[p.\,39]{Sprindzuk-1979-Metrical-theory}.  Throughout, the
symbols $\ll$ and $\gg$ will be used to indicate an inequality
with an unspecified positive multiplicative constant.   If $a \ll
b$ and $ a \gg b $ we write $a\asymp b$, and say that the
quantities $a$ and $b$ are comparable.

\begin{lemma}\label{l3}
Let $h$ be a positive integer. Then
\begin{equation}\label{e:004}
    \hspace{-10ex}\sum_{\hspace{0ex}\vv q \in \Z^n  \; : \   |\vv q|=h, \ \gcd(\vv
q)=1} \!\! \hspace{-6ex}  1 \ \hspace{5ex} \asymp   \ \
\left\{\begin{array}{cl}
                       \varphi(h)  & \text{\rm if }n=2  \\[1ex]
                       h^{n-1}  & \text{\rm if } n\ge3  \ ,
                     \end{array}
 \right.
\end{equation}
where the implied constants are independent of  $h$.
\end{lemma}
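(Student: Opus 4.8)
The plan is to reduce the primitive count to the total count of integer points of sup-norm exactly $h$ and then separate out the common divisor by M\"obius inversion. Write $M(h)$ for the number of $\vv q\in\Z^n$ with $|\vv q|=h$, and $N(h)$ for the number of those that also satisfy $\gcd(\vv q)=1$; the left-hand side of $(\ref{e:004})$ is exactly $N(h)$. Counting lattice points in the two nested cubes $\{\vv q:|\vv q|\le h\}$ and $\{\vv q:|\vv q|\le h-1\}$ gives the exact identity $M(h)=(2h+1)^n-(2h-1)^n$. Since $N(h)\le M(h)$ and $(2h+1)^n-(2h-1)^n\asymp h^{n-1}$ for $n\ge2$ (e.g.\ by the mean value theorem applied to $t\mapsto t^n$), the upper bound $N(h)\ll h^{n-1}$ is immediate for every $n\ge2$; all the actual work is in the lower bound, together with pinning down the finer shape of the count when $n=2$.

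Every $\vv q$ with $|\vv q|=h$ factors uniquely as $\vv q=e\,\vv q'$ with $e=\gcd(\vv q)$ (so $e\mid h$, since $e$ divides the coordinate of absolute value $h$) and $\vv q'$ primitive with $|\vv q'|=h/e$; hence $M(h)=\sum_{e\mid h}N(e)$, and M\"obius inversion gives $N(h)=\sum_{d\mid h}\mu(d)\,M(h/d)$. For $n=2$ this settles everything on the nose: $M(h)=(2h+1)^2-(2h-1)^2=8h$ for every $h\ge1$, so $N(h)=8\sum_{d\mid h}\mu(d)(h/d)=8\varphi(h)$ by the classical formula for Euler's function, whence $N(h)\asymp\varphi(h)$ (indeed with exact constant, consistently with the $n=2$ instance of $(\ref{e:004})$ and with $(\ref{a2++})$).

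For $n\ge3$ I would expand binomially: $M(h)=n2^nh^{n-1}+R(h)$, where $R$ is an explicit polynomial in $h$ of degree at most $n-3$ (the constant $2$ when $n=3$, the linear term $16h+2$-type expression for $n=4$, and so on). Substituting into the inversion formula yields $N(h)=n2^nh^{n-1}\prod_{p\mid h}\bigl(1-p^{-(n-1)}\bigr)+\sum_{d\mid h}\mu(d)R(h/d)$. Because $n-1\ge2$, the Euler product lies between $\zeta(2)^{-1}$ and $1$, so the main term is $\asymp h^{n-1}$; a routine divisor-sum estimate bounds the remainder by $O\bigl(h^{n-3}\tau(h)\bigr)=o(h^{n-1})$, where $\tau$ is the number-of-divisors function. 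Hence $N(h)\asymp h^{n-1}$ for all sufficiently large $h$, and the finitely many small $h$ are handled trivially, since $1\le N(h)\le M(h)$ (note $(h,1,0,\dots,0)$ is primitive, so $N(h)\ge1$ for every $h\ge1$).

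There is no serious obstacle here. The only two steps demanding a little care are verifying that the subleading terms of $(2h+1)^n-(2h-1)^n$ really have degree $\le n-3$ — this is precisely what makes the inversion error negligible once $n\ge3$ — and the standard estimate $\sum_{d\mid h}d^{-s}\ll h^{\ve}$ (with its sharper forms when $s\ge1$) applied to the error sum. The one genuinely informative ingredient is the identity $\sum_{d\mid h}\mu(d)(h/d)=\varphi(h)$: it is exactly the reason the $n=2$ count is governed by $\varphi(h)$ rather than by $h$, which is what forces the appearance of $\varphi$ throughout the $n=2$ case of the main theorem.
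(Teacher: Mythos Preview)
Your argument is correct. The paper does not actually prove Lemma~\ref{l3}: it is stated as a preliminary and sourced to \cite[p.\,39]{Sprindzuk-1979-Metrical-theory}, so there is nothing to compare your approach against. Your M\"obius-inversion computation is the natural route and matches the standard treatment; in particular, your exact evaluation $N(h)=8\varphi(h)$ for $n=2$ is precisely the identity underlying the $n=2$ case of (\ref{e:004}).

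Two minor cosmetic points, neither affecting the argument. First, for $n=4$ the remainder is $R(h)=16h$, not ``$16h+2$-type''; more generally only the terms with $k\equiv n-1\pmod 2$ survive in $(2h+1)^n-(2h-1)^n$, which is exactly what you use to get $\deg R\le n-3$. Second, your parenthetical pointer to (\ref{a2++}) is misplaced --- that formula concerns $|B'(q,\delta)|$ for $n=1$, not the primitive-point count --- but this is just a stray cross-reference.
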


\section{Quasi-independence on average}

We have seen that establishing quasi-independence on average as
stated in Theorem \ref{tqia}
 lies at the heart of Theorem
\ref{t1}.  The proof of Theorem \ref{tqia} splits naturally into
establishing various key measure estimates.

\subsection{Measure of $B'(\vv q,\delta)$ and $B'(\vv
q_1,\delta_1)\cap B'(\vv q_2,\delta_2)$}

The goal of this section is to extend the measure estimates of Lemma
\ref{lem9-} beyond the $n=1$ case.   Given $\delta>0$, $\vv q\in\Z^n\nz$ and $\vv
p\in\Z^m$,  let
$$
B(\vv q,\vv p,\delta):=\{\vv X\in\I^{nm}:|\vv q\vv X+\vv p|<\delta\}.
$$
The following observation  will enable us to determine the precise measure of $B'(\vv q,\delta)$.

\begin{lemma}\label{lem5}
Let $n, m \geq 1 $ and  let $\vv q  \in \Z^n\nz$ and  $\delta
\in(0,1/2)$. Then, for any $l|\gcd(\vv q)$
\begin{equation}\label{e:007}
\sum_{\vv p\in\Z^m}|B(\vv q,l\vv p,\delta)|=\left(\frac{2\delta}{l}\right)^m.
\end{equation}
\end{lemma}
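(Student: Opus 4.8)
\textbf{Proof plan for Lemma \ref{lem5}.}

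The plan is to reduce the claimed identity to the $l=1$ case, which is essentially the content of \eqref{add3} in Lemma \ref{l2}, and then run a simple change-of-variables argument. First I would note that the sets $B(\vv q,l\vv p,\delta)$, as $\vv p$ ranges over $\Z^m$, are pairwise disjoint modulo null sets: if $\vv X$ lies in both $B(\vv q,l\vv p,\delta)$ and $B(\vv q,l\vv p',\delta)$ with $\vv p\neq\vv p'$, then each coordinate of $\vv q\vv X$ is within $\delta<1/2$ of both $-lp_j$ and $-lp'_j$, forcing $l|p_j-p'_j|<1$ in that coordinate, hence $\vv p=\vv p'$. So the left-hand side of \eqref{e:007} equals the measure of the union $U_l:=\bigcup_{\vv p\in\Z^m}B(\vv q,l\vv p,\delta)$, namely the set of $\vv X\in\I^{nm}$ for which $\vv q\vv X$ lies within $\delta$ (in sup-norm) of the sublattice $l\Z^m\subset\Z^m$.

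Next I would exploit the product structure across the $m$ columns. Writing $\vv X=(\vv x^{(1)},\dots,\vv x^{(m)})$ with $\vv x^{(j)}\in\I^n$ the $j$-th column, the condition defining $U_l$ decouples: $\vv X\in U_l$ iff for each $j$ the linear form $\vv q\cdot\vv x^{(j)}$ is within $\delta$ of $l\Z$. Hence $|U_l|=\prod_{j=1}^m \mu_j$ where $\mu_j$ is the $n$-dimensional measure of $\{\vv x\in\I^n:\ \|\vv q\cdot\vv x\|_{l\Z}<\delta\}$, and by symmetry all $\mu_j$ are equal to a single quantity $\mu=\mu(\vv q,l,\delta)$; thus it suffices to show $\mu=2\delta/l$. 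For this, fix a coordinate $i$ with $q_i\neq0$ and integrate first in $x_i$: for any fixed values of the other coordinates, $t\mapsto q_i t$ sweeps an interval of length $|q_i|\ge 1$ as $t$ ranges over $[0,1]$, and on any interval of length $l$ (a fundamental domain for $l\Z$) the set where the distance to $l\Z$ is less than $\delta$ has length exactly $2\delta$ (here $\delta<1/2\le l/2$ is used so the two sides of the $\delta$-neighbourhood do not overlap). A periodicity/averaging computation then gives that the $x_i$-slice always has length $2\delta/|q_i|$, and integrating over the remaining coordinates — using that $\gcd(\vv q)\mid l$ only indirectly, since what actually matters is simply that $q_i$ is an integer so the sweep length is an integer multiple of a full period — yields $\mu=2\delta/l$; equivalently one can cite \eqref{add3} for the $l=1$ case and then observe $\mu(\vv q,l,\delta)=\tfrac1l\,\mu(\vv q,1,l\delta/?)$... more cleanly, rescale: the map $\vv x\mapsto$ (same) shows the $l\Z$-version is just the $\Z$-version with the ambient period stretched by $l$, giving the factor $1/l$ directly.

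The only mild subtlety — and the step I would be most careful about — is the slice-length computation: one must check that for \emph{every} fixed choice of the other coordinates the one-dimensional section $\{t\in[0,1]:\|q_it+c\|_{l\Z}<\delta\}$ has length exactly $2\delta/|q_i|$, which relies on $q_i\in\Z$ (so that as $t$ runs over $[0,1]$ the point $q_it+c$ covers $|q_i|$ full periods of length $l$) and on $\delta<1/2\le l/2$ (so that within each period the $\delta$-neighbourhood of a lattice point is a genuine interval of length $2\delta$ with no wraparound overlap). Everything else is bookkeeping: disjointness, the column-wise product decomposition, and symmetry in the columns. In fact the cleanest write-up simply says: by the disjointness observation the sum equals $|U_l|$; by \eqref{add3} applied with $\delta$ replaced appropriately, or directly by Fubini in one nonzero coordinate, $|U_l|=(2\delta/l)^m$. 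The divisibility hypothesis $l\mid\gcd(\vv q)$ is not even needed for the measure identity — it is recorded because that is the only case in which this lemma will subsequently be applied (when passing to coprime representatives), so I would keep it in the statement but remark that the computation uses only $l\in\N$.
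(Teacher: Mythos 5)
Your disjointness observation and the reduction to computing $|U_l|$ are correct and mirror the paper's opening move. From there the paper proceeds by a scaling identity,
\[
\bigcup_{\vv p\in\Z^m}B(\vv q,l\vv p,\delta)=B(\vv q/l,\delta/l),
\]
which makes sense precisely because $l\mid\gcd(\vv q)$, so that $\vv q/l\in\Z^n\nz$; the right-hand side is again of the standard form and \eqref{add3} gives $(2\delta/l)^m$ at once. Your Fubini/slicing route is a legitimate alternative in principle, but it cannot dispense with the divisibility hypothesis, contrary to your concluding remark. The identity \eqref{e:007} is simply false without it: take $n=m=1$, $q=2$, $l=3$, $\delta\in(0,1/2)$. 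The only $p$ for which $B(2,3p,\delta)\cap[0,1]$ is nonempty is $p=0$, and $|B(2,0,\delta)|=\delta/2$, whereas the claimed value is $2\delta/l=2\delta/3\neq\delta/2$.

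The error enters exactly at the slice-length step that you flagged as the subtle one, and the arithmetic there is off in two places. As $t$ ranges over $[0,1]$, the point $q_it+c$ sweeps an interval of length $|q_i|$, which covers $|q_i|/l$ periods of $l\Z$ --- not $|q_i|$ periods as you write --- and the slice length you want is $2\delta/l$ (the linear density of the $\delta$-neighbourhood of $l\Z$), not $2\delta/|q_i|$. For the section $\{t\in[0,1]:\|q_it+c\|_{l\Z}<\delta\}$ to have length exactly $2\delta/l$ for \emph{every} $c$, the sweep must consist of a whole number of periods, i.e.\ $l\mid q_i$; if $l\nmid q_i$ the fractional period at the end contributes a $c$-dependent amount and the slice length genuinely fluctuates, which is exactly what the counterexample above detects. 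The hypothesis $l\mid\gcd(\vv q)$ (note: not $\gcd(\vv q)\mid l$, which you wrote at one point) guarantees $l\mid q_i$ for every $i$, so any nonzero coordinate serves. With these corrections your slicing argument does close and becomes a reasonable alternative to the paper's change of variable --- somewhat more self-contained, since it does not lean on \eqref{add3}, at the cost of redoing by hand a computation that \eqref{add3} already packages.
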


\begin{proof}
 Given that $\delta<1/2$, it is easily seen that the sets $B(\vv q,l\vv p,\delta) $ as  $\vv p$ varies are pairwise disjoint. Therefore, in view of (\ref{add3}) the lemma  follows on showing that
\begin{equation}\label{e:010}
\bigcup_{\vv p\in\Z^m}B(\vv q,l\vv p,\delta)=B(\vv q/l,\delta/l)  \, .
\end{equation}
By definition, $\vv X$ belongs to the left hand side of (\ref{e:010}) if and only if there is a $\vv
p\in\Z^m$ such that $|\vv q\vv X+l\vv p|<\delta$. Since $l$ divides $\vv q$ this is equivalent to
$|(\vv q/l)\vv X+\vv p|<\delta/l$ which, by definition  is equivalent to the statement that  $\vv X$ belongs to the right hand side of (\ref{e:010}).
\end{proof}

In the case $n=1$, the following statement
reduces to (\ref{a2++}) of Lemma \ref{lem9-}.

\begin{lemma}\label{nbig1}
Let $n, m \geq 1 $ and  let $\vv q  \in \Z^n\nz$ and  $\delta
\in(0,1/2)$. Then,
\begin{equation}\label{e:015}
|B'(\vv q,\delta)|=(2\delta)^m\prod_{p|d}(1-p^{-m})\, .
\end{equation}
The product is over prime divisors $p$ of $d:= \gcd(\vv q)$ and is
defined to be one if $d=1$.
\end{lemma}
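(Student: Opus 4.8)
The plan is to decompose $B'(\vv q,\delta)$ according to the value of $\gcd(\vv p,\vv q)$ over the relevant integer points $\vv p$, and then invoke the summation identity of Lemma \ref{lem5} together with a M\"obius-type inclusion--exclusion. First, observe that if $\vv X \in B(\vv q,\vv p,\delta)$ for some $\vv p \in \Z^m$, then that $\vv p$ is unique (since $\delta<1/2$, the translates are pairwise disjoint, as used in the proof of Lemma \ref{lem5}); call it $\vv p(\vv X)$. Set $d=\gcd(\vv q)$. Then $\vv X \in B'(\vv q,\delta)$ if and only if $\gcd(\vv p(\vv X),\vv q)=1$, and since $\gcd(\vv p(\vv X),\vv q) = \gcd(\gcd(\vv p(\vv X),d),d)$ divides $d$, the set $B(\vv q,\delta) = \bigcup_{\vv p}B(\vv q,\vv p,\delta)$ partitions, up to null sets, into the pieces
\begin{equation*}
B_l := \bigcup_{\substack{\vv p \in \Z^m \\ \gcd(\vv p,\vv q)=l}} B(\vv q,\vv p,\delta), \qquad l \mid d,
\end{equation*}
so that $B'(\vv q,\delta) = B_1$ (up to a null set) and $|B(\vv q,\delta)| = \sum_{l\mid d}|B_l|$.

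Next I would compute $|B_l|$ for each $l\mid d$. Writing $\vv p = l\vv p'$, the condition $\gcd(\vv p,\vv q)=l$ becomes $\gcd(\vv p', \vv q/l)=1$, and $B(\vv q, l\vv p',\delta)$ depends on $\vv p'$ through $\vv q \vv X + l\vv p'$; since $l\mid \vv q$, after the change of variables this is governed by $B(\vv q/l, \delta/l)$ exactly as in \eqref{e:010}. The point is that $B_l$ is, up to the measure-preserving identification used in Lemma \ref{lem5}, a scaled copy of the \emph{primitive} set $B'(\vv q/l, \delta/l)$. Thus $|B_l| = |B'(\vv q/l, \delta/l)|$. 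Define $f(\vv q,\delta) := |B'(\vv q,\delta)|$. Then Lemma \ref{lem5} with $l=1$ gives $\sum_{\vv p}|B(\vv q,\vv p,\delta)| = (2\delta)^m$, while the partition above reads
\begin{equation*}
(2\delta)^m \;=\; |B(\vv q,\delta)| \;=\; \sum_{l\mid d} |B_l| \;=\; \sum_{l \mid d} f(\vv q/l,\,\delta/l).
\end{equation*}
Since $\gcd(\vv q/l)=d/l$, the quantity $f(\vv q/l,\delta/l)$ depends only on $d/l$ and on $\delta/l$ in the combination dictated by homogeneity; more precisely, writing $g(d,\delta) := f(\vv q,\delta)/(2\delta)^m$ (which depends only on $d$ — this is part of what we are proving, but it follows inductively), the identity becomes $\sum_{l\mid d}(1/l)^m\, (2\delta)^m\, g(d/l,\delta/l)\cdot l^m / (2\delta)^m$ — here one must be slightly careful — reorganizing, one gets the clean multiplicative recursion $\sum_{l \mid d} f(\vv q/l, \delta/l) = (2\delta)^m$.

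Finally I would solve this recursion by M\"obius inversion / induction on the number of prime factors of $d$. The base case $d=1$ gives $f(\vv q,\delta) = (2\delta)^m$, matching the empty product. For the inductive step, isolate the $l=1$ term: $f(\vv q,\delta) = (2\delta)^m - \sum_{l\mid d,\ l>1} f(\vv q/l,\delta/l)$, and substitute the inductive formula $f(\vv q/l,\delta/l) = (2\delta/l)^m \prod_{p\mid d/l}(1-p^{-m})$ for each $l>1$. The resulting sum over divisors telescopes via the standard identity $\sum_{l\mid d}(1/l)^m\prod_{p\mid d/l}(1-p^{-m}) = \prod_{p\mid d}\bigl(1 + (1-p^{-m})(p^{-m}+p^{-2m}+\cdots)\bigr)$, which one checks factors as a product over primes and collapses to give $f(\vv q,\delta) = (2\delta)^m \prod_{p\mid d}(1-p^{-m})$. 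Alternatively, and more cleanly, one recognizes $h \mapsto \sum_{l\mid h} f(\cdot/l,\cdot/l)$ as a Dirichlet-type convolution and inverts directly; since both sides are multiplicative in $d$, it suffices to verify the prime-power case $d = p^k$ by a one-line geometric-series computation. The main obstacle is purely bookkeeping: making the change of variables in $B_l$ genuinely measure-preserving and checking that $f(\vv q/l,\delta/l)$ depends on $\vv q$ only through $\gcd(\vv q)$ so that the recursion closes — once that is pinned down, the number-theoretic inversion is routine.
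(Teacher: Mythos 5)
Your proposal is correct in substance but takes a genuinely different route from the paper. The paper encodes the coprimality condition directly inside the union defining $B'(\vv q,\delta)$ via the indicator $\sum_{l\mid \gcd(\gcd(\vv p),d)}\mu(l)$, then swaps the order of summation and applies Lemma~\ref{lem5} with general $l$ to evaluate $\sum_{\vv p}|B(\vv q,l\vv p,\delta)| = (2\delta/l)^m$, obtaining the closed formula in one pass. You instead partition the full set $B(\vv q,\delta)$ by the value $l=\gcd(\vv p,\vv q)$, identify each piece as $B_l = B'(\vv q/l,\delta/l)$ (the set identity, not merely a measure equality, holds: writing $\vv p = l\vv p'$ one has $\gcd(\vv p,\vv q)=l\iff\gcd(\vv p',\vv q/l)=1$ and $B(\vv q,l\vv p',\delta)=B(\vv q/l,\vv p',\delta/l)$, so your ``measure-preserving identification'' hedging is unnecessary), arrive at the recursion $\sum_{l\mid d}|B'(\vv q/l,\delta/l)|=(2\delta)^m$, and then solve it. This is the dual viewpoint: express $B$ as a disjoint union of $B'$-type pieces and invert, rather than express $B'$ via inclusion--exclusion over $B$-type pieces. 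The paper's route is shorter; yours is perhaps more transparent geometrically but costs an extra inversion step, and it only needs the $l=1$ case of Lemma~\ref{lem5} because the rescaling is absorbed into the identification $B_l=B'(\vv q/l,\delta/l)$.

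One small correction: the ``standard identity'' you write, $\sum_{l\mid d}l^{-m}\prod_{p\mid d/l}(1-p^{-m}) = \prod_{p\mid d}\bigl(1+(1-p^{-m})(p^{-m}+p^{-2m}+\cdots)\bigr)$, is false as stated (the right-hand side evaluates to $\prod_{p\mid d}(1+p^{-m})$, not to $1$); the geometric series must be truncated at the $p$-adic valuation of $d$. The alternative you give — noting that $l\mapsto l^{-m}$ and $l\mapsto\prod_{p\mid l}(1-p^{-m})$ are both multiplicative, so their Dirichlet convolution is multiplicative, and checking $\sum_{j=0}^{k}p^{-jm}\prod_{p'\mid p^{k-j}}(1-p'^{-m}) = (1-p^{-m})\tfrac{1-p^{-km}}{1-p^{-m}}+p^{-km}=1$ on prime powers — is correct and should be the one you present. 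With that fixed, the proof closes.
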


\begin{proof}
Recall the following well known identities for the M\"obius function $\mu$.
$$
\sum_{l|d}\mu(l)=\left\{\begin{array}{cc}
                          0 & \text{if } \ d>1  \\
                          1 & \text{if } \ d=1
                        \end{array}
\right.\qquad\text{and}\qquad \sum_{l|d}\frac{\mu(l)}{l^m}=\prod_{p|d}\left(1- p^{-m} \right)   \ .
$$
Also, for any set $A$ we use the convention that  $A\times 1:=A$ and $A\times 0:=\emptyset$. Then,
\begin{equation}\label{add1}
B'(\vv q,\delta)=\bigcup_{\vv p\in\Z^m}B(\vv q,\vv p,\delta)  \  \sum_{l|d}\mu(l)\,.
\end{equation}
Since $\delta<1/2$, the sets in the above union  do not overlap and so
$$
\begin{array}{rcl}
 \displaystyle |B'(\vv q,\delta)| & \stackrel{\eqref{add1}}{=} & \displaystyle\sum_{\vv p\in\Z^m}|B(\vv q,\vv p,\delta)|   \ \sum_{l|d}\mu(l)\\[4ex]
 & = & \displaystyle \sum_{l|d=\gcd(\vv q)}\mu(l)  \
 \sum_{\vv p\in\Z^m}|B(\vv q,l\vv p,\delta)|\\[4ex]
 & \stackrel{\eqref{e:007}}{=} & \displaystyle  \sum_{l|d}\mu(l)\left(\frac{2\delta}{l}\right)^m
  =  \displaystyle (2\delta)^m\prod_{p|d}\left(1-\frac{1}{p^m}\right).
\end{array}
$$
\vspace*{-3ex}
\end{proof}

\

The following is a consequence of examining the product term in
 Lemma \ref{nbig1}.

\begin{lemma}\label{lem8}
Let $n \geq 1 $ and  let $\vv q  \in \Z^n\nz$,  $d:= \gcd(\vv q)$
and $\delta \in(0,1/2)$.  If $m=1$, then
\begin{equation}\label{e:018}
|B'(\vv q,\delta)|= 2\delta\ \frac{\varphi(d)}{d}\,.
\end{equation}
If $m>1$, then
\begin{equation}\label{e:017}
\frac6{\pi^2}\,(2\delta)^m \le |B'(\vv q,\delta)|\le (2\delta)^m\,.
\end{equation}

\end{lemma}

\begin{proof}
In the case $m>1$, we trivially have that
\begin{equation}
\label{priv}
1\ge
\prod_{p|d}\big(1-p^{-m}\big)>\prod_{p}\big(1-p^{-2}\big)=\frac{1}{\zeta(2)}=\frac{6}{\pi^2}\,.
\end{equation}
Therefore (\ref{e:015}) implies (\ref{e:017}). In the case $m=1$, we
have that
$
\prod_{p|d}\big(1-p^{-m}\big)=\varphi(d)/d  .
$
Therefore (\ref{e:015}) implies (\ref{e:018}).
\end{proof}

We now turn our attention to estimating the measure of the pairwise
intersection between the sets $B'(\vv q,\delta)$. In the case $n=1$,
the following statement coincides with (\ref{a2+}) of Lemma
\ref{lem9-}.

\begin{lemma}\label{lem9}
Let $n, m \geq 1$. There is a constant $C>0$ such that for
$\delta_1,\delta_2\in(0,1/2)$ and   $\vv q_1,\vv q_2\in\Z^n\nz$
satisfying $\vv q_1\not=\pm\vv q_2$
\begin{equation}\label{a2}
  |B'(\vv q_1,\delta_1)\cap B'(\vv q_2,\delta_2)| \le C(\delta_1\delta_2)^m\,.
\end{equation}
\end{lemma}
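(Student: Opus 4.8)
The plan is to reduce the general-$n$ estimate to the already-known one-dimensional case (Lemma \ref{lem9-}) by projecting onto a line. First I would dispose of the trivial situation $\vv q_1 \parallel \vv q_2$: since $\vv q_1 \neq \pm \vv q_2$, parallelism forces one of them (say $\vv q_1$) to be a proper integer multiple of a primitive vector $\vv q$, so $\gcd(\vv q_1)$ and $\gcd(\vv q_2)$ cannot both equal the minimal value, and in fact at least one of $B'(\vv q_i,\delta_i)$ has a coprimeness constraint that makes it comparatively small; here $B'(\vv q_1,\delta_1)\cap B'(\vv q_2,\delta_2)\subset B'(\vv q_1,\delta_1)$ and one checks directly using Lemma \ref{nbig1} that $|B'(\vv q_1,\delta_1)|\le(2\delta_1)^m$, while an elementary argument (the two systems of congruences become incompatible modulo a prime dividing the larger multiplier) contributes the extra factor $\delta_2^m$. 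The bulk of the work is the non-parallel case.

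For non-parallel $\vv q_1, \vv q_2$, the key step is to bound $B'(\vv q_1,\delta_1)\cap B'(\vv q_2,\delta_2)$ by a union, over a bounded number of pairs of residue classes, of translates of the \emph{one-dimensional} set $B'(q,\delta)$ studied in Lemma \ref{lem9-}. Concretely, fix a unimodular change of variables on $\Z^n$ so that $\vv q_1$ becomes a multiple of the first basis vector; then the condition $|\vv q_1 \vv X + \vv p_1|<\delta_1$ with $\gcd(\vv p_1,\vv q_1)=1$ depends (column by column) only on the first row of the transformed matrix $\vv X$, and is exactly a one-dimensional $B'$-type condition in that row; the remaining $n-1$ rows are free. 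The second condition $|\vv q_2 \vv X + \vv p_2|<\delta_2$, restricted to each fixed choice of those free rows, is then an affine $B$-type condition (no coprimeness needed — we just use $B'\subset B$) on the first row again, governed by the first coordinate of the transformed $\vv q_2$, which is nonzero precisely because $\vv q_1\nparallel\vv q_2$. Fubini in the free rows, together with Lemma \ref{l2}'s product formula $|B(\vv q_1',\delta_1)\cap B(\vv q_2',\delta_2)| = (2\delta_1)^m(2\delta_2)^m$ for the non-parallel one-dimensional shadows — or, where the shadows happen to be parallel, the bound \eqref{a2+} from Lemma \ref{lem9-} directly — yields \eqref{a2} with an absolute constant $C$.

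The main obstacle I anticipate is bookkeeping rather than anything deep: one must carefully track how the coprimeness condition $\gcd(\vv p,\vv q)=1$ behaves under the unimodular substitution and the passage from $\R^{nm}$ to the first row, and ensure that the column-by-column ($1\le j\le m$) structure is handled uniformly so that the $m$-th powers come out correctly. A subtlety is that after fixing the free rows the second condition may force $\vv p_2$ into finitely many residue classes, and one needs the number of such classes to be bounded independently of $\vv q_1,\vv q_2,\delta_1,\delta_2$ — this follows because $\delta_i<1/2$ makes the relevant $B(\vv q,\vv p,\delta)$ sets disjoint, so at most one $\vv p$ (per column, per choice of free rows) can contribute. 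Once these combinatorial points are pinned down, inserting Lemmas \ref{l2} and \ref{lem9-} is mechanical and the constant $C$ can be taken to be the one furnished by Lemma \ref{lem9-}.
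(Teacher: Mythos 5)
Your overall decomposition into parallel and non-parallel pairs is the right skeleton, and your treatment of the non-parallel case would work (though the unimodular-substitution machinery is unnecessary: once $\vv q_1\nparallel\vv q_2$ the paper simply observes $B'(\vv q_i,\delta_i)\subset B(\vv q_i,\delta_i)$ and quotes the exact pairwise independence $|B(\vv q_1,\delta_1)\cap B(\vv q_2,\delta_2)|=(2\delta_1)^m(2\delta_2)^m$ from Lemma~\ref{l2}, and that is the whole argument).

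The genuine gap is in the parallel case, which you label as the trivial one; in fact it is the crux. Writing $\vv q_1=k_1\vv q$, $\vv q_2=k_2\vv q$ with $\vv q$ primitive and $k_1\neq k_2$, your proposed ``congruence incompatibility'' heuristic cannot supply the extra factor $\delta_2^m$. The coprimeness conditions $\gcd(\vv q_1,\vv p_1)=1$ and $\gcd(\vv q_2,\vv p_2)=1$ translate to $\gcd(k_1,\vv p_1)=1$ and $\gcd(k_2,\vv p_2)=1$: these are constraints on \emph{distinct} integer vectors $\vv p_1,\vv p_2$ and need not clash at all. For a concrete test case take $\vv q_1=\vv q$ (so $k_1=1$ and the first coprimeness constraint is vacuous) and $\vv q_2=2\vv q$; there is no prime modulo which anything becomes incompatible, yet the bound $C(\delta_1\delta_2)^m$ must still hold, and it does not follow from $|B'(\vv q_1,\delta_1)|\le(2\delta_1)^m$ plus any parity argument. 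The source of the extra $\delta_2^m$ is a genuine overlap count, not an arithmetic obstruction. The paper handles this by a different device altogether: it uses the measure-preserving map $T_{\vv q}:\I^{mn}\to\I^m$, $\vv X\mapsto\vv q\vv X\bmod 1$, to push the problem down to the one-dimensional sets $B'(k_1,\delta_1)\cap B'(k_2,\delta_2)$ and then invokes Lemma~\ref{lem9-} (which encapsulates the nontrivial $n=1$ overlap estimates of Sprind\v{z}uk and Gallagher). That projection idea is what your proposal is missing, and without it --- or without reproving the underlying one-dimensional counting --- the parallel case does not close.
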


\begin{proof}
In view of the fact that $B'(\vv q,\delta)\subset B(\vv q,\delta)$ and Lemma~\ref{l2},  we only need to deal with
the situation when  $\vv q_1$ and $\vv
q_2$ are parallel. Then, it follows that there exists $\vv q\in\Z^n$
with $\gcd(\vv q)=1$ and two different positive integers $k_1,k_2$
such that $\vv q_1=k_1\vv q$ and $\vv q_2=\pm k_2\vv q$. Without
loss of generality,  assume that $\vv q_2=k_2\vv q$.

%

Let $\vv X\in B'(\vv q_1,\delta_1)\cap B'(\vv q_2,\delta_2)$. By
definition, there are integer points $\vv p_1,\vv p_2\in\Z^m$ such
that $|\vv q_i\vv X+\vv p_i|<\delta_i$ and $\gcd(\vv p_i,\vv q_i)=1$
for $i=1,2$. Equivalently we have that
\begin{equation}\label{e:023}
\left\{\begin{array}{rl}
    |k_1\vv q\vv X+\vv p_1|<\delta_1,&\quad \gcd(k_1,\vv
    p_1)=1\,,\\[1ex]
    |k_2\vv q\vv X+\vv p_2|<\delta_2,&\quad \gcd(k_2,\vv p_2)=1\,.
    \end{array}
    \right.
\end{equation}
Consider the transformation
\begin{equation}\label{add4}
    T_{\vv q}:\I^{mn}\to\I^m\ :\ \vv X\mapsto \vv q\vv X\bmod 1\,.
\end{equation}
It is  readily verified that
\begin{equation}\label{add5}
B'(\vv q_1,\delta_1)\cap B'(\vv q_2,\delta_2)\subseteq T_{\vv
q}^{-1}\big(B'(k_1,\delta_1)\cap B'(k_2,\delta_2)\big).
\end{equation}
The transformation $T_{\vv q}$ is measure preserving; i.e. for
any measurable set $A\subset\I^m$ we have that $|T^{-1}_{\vv
q}(A)|=|A|$ -- see equation (48) in
\cite{Sprindzuk-1979-Metrical-theory}. Therefore, by (\ref{add5}) we
have that
\begin{equation}\label{add6}
    |B'(\vv q_1,\delta_1)\cap B'(\vv q_2,\delta_2)|\le
    |B'(k_1,\delta_1)\cap B'(k_2,\delta_2)|\,.
\end{equation}
Applying Lemma~\ref{lem9-} to (\ref{add6}) completes the proof of
the lemma.
\end{proof}

\subsection{Measure of $B'_{\vv q}(\psi)$ on average}

\begin{lemma}\label{lem10}
Let $nm>1$ and $\psi(h)<1/2$ for all $h\in\N$. Then  with  $\vv q\in\Z^n\nz$ and $ N \in \N$,
\begin{equation}\label{e:019}
    \sum_{ |\vv q|\le N}|B'_{\vv q}(\psi)| \ \asymp \
    \sum_{h=1}^N h^{n-1}\psi(h)^m   \ .
\end{equation}
\end{lemma}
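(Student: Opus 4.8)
The plan is to sum the exact formula for $|B'_{\vv q}(\psi)|$ supplied by Lemma~\ref{nbig1} over all $\vv q$ with $|\vv q|\le N$, organise that sum according to the value of $d=\gcd(\vv q)$, and recognise the resulting arithmetic sum as being comparable to $\sum_{h\le N}h^{n-1}\psi(h)^m$. Concretely, write each $\vv q\in\Z^n\nz$ as $\vv q = d\,\vv r$ with $d=\gcd(\vv q)$ and $\vv r\in\Z^n$ primitive (i.e. $\gcd(\vv r)=1$). By Lemma~\ref{nbig1},
\[
|B'_{\vv q}(\psi)| = (2\psi(|\vv q|))^m\prod_{p\mid d}(1-p^{-m}),
\]
and $|\vv q| = d\,|\vv r|$, so that
\[
\sum_{|\vv q|\le N}|B'_{\vv q}(\psi)|
 = \sum_{d=1}^{N}\Big(\prod_{p\mid d}(1-p^{-m})\Big)
    \sum_{\substack{\vv r\in\Z^n:\ \gcd(\vv r)=1\\ |\vv r|\le N/d}}
    \big(2\psi(d|\vv r|)\big)^m.
\]
Now split into the cases $m\ge 2$ and $m=1$. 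When $m\ge 2$ the product $\prod_{p\mid d}(1-p^{-m})$ lies between $6/\pi^2$ and $1$ (as recorded in Lemma~\ref{lem8}), hence is $\asymp 1$, and one can simply drop it.

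For the inner sum, I would break it up over the possible values $h=d|\vv r|$: for fixed $d$, as $\vv r$ ranges over primitive vectors with $|\vv r|=k$, the point $\vv q=d\vv r$ has $|\vv q|=dk$, so $\psi(d|\vv r|)=\psi(dk)$ is constant on each such shell, and by Lemma~\ref{l3} the number of primitive $\vv r$ with $|\vv r|=k$ is $\asymp\varphi(k)$ if $n=2$ and $\asymp k^{n-1}$ if $n\ge3$. Summing over $d\le N$ and $k\le N/d$ and substituting $h=dk$ (so the pair $(d,k)$ with $dk=h$ is counted by the divisors of $h$) gives, up to constants,
\[
\sum_{|\vv q|\le N}|B'_{\vv q}(\psi)| \ \asymp\ \sum_{h\le N}\psi(h)^m\ g_n(h),
\qquad g_n(h):=\sum_{d\mid h}\big(h/d\big)^{n-1} \ \text{or}\ \sum_{d\mid h}\varphi(h/d)
\]
according as $n\ge3$ or $n=2$. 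So the whole lemma reduces to the purely number-theoretic fact that $g_n(h)\asymp h^{n-1}$. The lower bound $g_n(h)\ge h^{n-1}$ (for $n\ge3$, taking $d=1$) and $g_2(h)\ge\varphi(h)\gg h/\log\log h$ is the wrong direction; the point is that $g_2(h)=\sum_{d\mid h}\varphi(d)=h$ exactly, and for $n\ge 3$, $g_n(h)=\sum_{d\mid h}d^{n-1}=h^{n-1}\sum_{d\mid h}d^{-(n-1)}$ with $1\le\sum_{d\mid h}d^{-(n-1)}\le\sum_{d=1}^\infty d^{-(n-1)}=\zeta(n-1)<\infty$ once $n\ge3$. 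Hence $g_n(h)\asymp h^{n-1}$ with absolute implied constants, which finishes the case $nm>1$ with $m\ge2$ and also the case $n\ge 3$, $m\ge 1$.

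The remaining case is $n=2$, $m=1$ (the genuinely new case), where Lemma~\ref{lem8} gives the exact identity $|B'_{\vv q}(\psi)| = 2\psi(|\vv q|)\,\varphi(d)/d$ with $d=\gcd(\vv q)$. Repeating the decomposition $\vv q=d\vv r$ with $\vv r$ primitive, $|\vv r|=k$, and using Lemma~\ref{l3} to count primitive $\vv r$ on each shell ($\asymp\varphi(k)$ since $n=2$), one gets
\[
\sum_{|\vv q|\le N}|B'_{\vv q}(\psi)| \ \asymp\
\sum_{d\le N}\frac{\varphi(d)}{d}\sum_{k\le N/d}\varphi(k)\,\psi(dk)
\ =\ \sum_{h\le N}\psi(h)\sum_{\substack{d,k\ge1\\ dk=h}}\frac{\varphi(d)\varphi(k)}{d}.
\]
The inner divisor sum $\sum_{dk=h}\varphi(d)\varphi(k)/d$ must be shown to be $\asymp h$; this is the one place where a small multiplicative-function estimate is needed. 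I expect this arithmetic step — controlling $\sum_{d\mid h}\tfrac{\varphi(d)}{d}\varphi(h/d)$ from above and below by an absolute constant times $h$ — to be the main (and essentially the only) obstacle; it follows by multiplicativity, checking the claim on prime powers $p^a$, where the sum is $\sum_{i=0}^{a}\tfrac{\varphi(p^i)}{p^i}\varphi(p^{a-i})$, which is $\le\sum_i\varphi(p^{a-i})\le p^a\cdot\frac{p}{p-1}$ and $\ge\varphi(p^a)\ge p^a/2$, giving $\asymp p^a$ with the product over primes of $p/(p-1)$ diverging being harmless because only primes dividing $h$ occur and their contribution telescopes to something $\le$ a constant multiple — in fact one checks directly that the product of the per-prime ratios is bounded, since $\sum_{i=0}^a\varphi(p^i)/p^i\cdot\varphi(p^{a-i})/p^a \le 2$ for all $p,a$. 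Once this is in hand, summation by parts is not even needed: the shells are handled termwise in $h$, and collecting the constants from Lemma~\ref{l3}, Lemma~\ref{nbig1}/\ref{lem8}, and the divisor-sum estimate yields \eqref{e:019} with implied constants depending only on $n$ and $m$.
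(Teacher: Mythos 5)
Your overall architecture is the same as the paper's: use the exact formula of Lemma~\ref{nbig1}, separate $m\ge 2$ (where the Euler-type product is uniformly bounded) from $m=1$, decompose $\vv q = d\vv r$ with $\vv r$ primitive, count primitive shells by Lemma~\ref{l3}, and reduce everything to the arithmetic estimate $f(h):=\sum_{d|h}\tfrac{\varphi(d)}{d}\varphi(h/d)\asymp h$ in the critical subcase $n=2$, $m=1$. That reduction is exactly what the paper does (its display~(\ref{e:021})), and the $m\ge 2$ treatment is essentially equivalent to the paper's, merely routed through the redundant $d\vv r$ decomposition.

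There is, however, a genuine gap in your lower bound for $f(h)$. You argue per prime power that
\[
f(p^a)=\sum_{i=0}^{a}\frac{\varphi(p^i)}{p^i}\,\varphi(p^{a-i})\ \ge\ \varphi(p^a)\ \ge\ \frac{p^a}{2},
\]
and conclude $f(p^a)\asymp p^a$. But the implied constants must multiply over all primes dividing $h$: the bound $f(p^a)\ge\varphi(p^a)$ only yields $f(h)\ge\varphi(h)$ (both are multiplicative), and $\varphi(h)/h$ is \emph{not} bounded below — it decays like $1/\log\log h$ along $h=\prod_{p\le y}p$ — while the cruder bound $f(p^a)\ge p^a/2$ yields only $f(h)\ge h/2^{\omega(h)}$, which is even weaker. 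You flag this worry yourself (``the product over primes of $p/(p-1)$ diverging'') but the resolution you offer (``telescopes to something bounded'', ``$\le 2$ for all $p,a$'') controls the wrong direction: it is the \emph{lower} bound that is in jeopardy, and a per-prime constant strictly less than $1$ is fatal unless the defects are summable. The fix is to evaluate $f(p^a)$ exactly: a short computation gives
\[
f(p^a)=\varphi(p^a)+\frac{p-1}{p}\sum_{j=0}^{a-1}\varphi(p^j)=p^{a-1}(p-1)+\frac{p-1}{p}\,p^{a-1}=p^a\bigl(1-p^{-2}\bigr),
\]
so $f(h)=h\prod_{p\mid h}(1-p^{-2})\ge (6/\pi^2)\,h$ because $\sum_p p^{-2}<\infty$ (this is the inequality~(\ref{priv}) in the paper; the paper reaches the same closed form by M\"obius inversion). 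Incidentally, your upper bound $\sum_i\varphi(p^{a-i})\le p^a\cdot\tfrac{p}{p-1}$ is also off — the sum equals $p^a$ exactly — though the correct value still gives $f(h)\le h$. Finally, you assert the case $n\ge 3$, $m=1$ is covered by your $m\ge 2$ argument, but there the factor $\varphi(d)/d$ cannot be dropped; the paper treats it separately (using the easy bound $\sum_{d\mid h}\varphi(d)/d^n\le\zeta(2)$), and you should too.
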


\begin{proof}
Naturally, the proof makes use of Lemma~\ref{lem8} and therefore splits
into two cases: $m>1$ and $m=1$.
We begin by considering the easy case
$m>1$. By (\ref{e:017}) and the fact that the number of
integer points $\vv q\in\Z^n$ with $|\vv q|=h$ is comparable to $  h^{n-1}$, we have that
$$
\begin{array}{rcl}
    \displaystyle \sum_{\vv q\in\Z^n\nz,\ |\vv q|\le N}|B'_{\vv q}(\psi)| & \asymp
    & \displaystyle \sum_{\vv q\in\Z^n\nz,\ |\vv q|\le N}\psi(|\vv q|)^m\\[4ex]
    & \asymp & \displaystyle \sum_{h=1}^N  \  \sum_{|\vv q|=h}\psi(|\vv q|)^m \\[2ex]
    & \asymp &
    \displaystyle \sum_{h=1}^N h^{n-1}\psi(h)^m\,.
\end{array}
$$
This establishes (\ref{e:019}) in the case $m>1$.

We proceed with the case $m=1$. By
(\ref{e:018}), it follows that
\begin{equation}\label{e:020}
    \begin{array}[b]{rcl}
  \displaystyle\sum_{\vv q\in\Z^n\nz,\ |\vv q|\le N}|B'_{\vv q}(\psi)|
  & = &\displaystyle \sum_{h=1}^N \ \sum_{\vv q\in\Z^n,\ |\vv q|=h}|B'_{\vv q}(\psi)|
  \\[2ex]
   & \asymp& \displaystyle\sum_{h=1}^N  \  \sum_{\vv q\in\Z^n,\ |\vv
   q|=h}\frac{\varphi(d)}{d}\,\psi(h) \hspace{10ex}  d:= \gcd(\vv q)  \\[4ex]
 & = &\displaystyle \sum_{h=1}^N\psi(h)\ \ \sum_{d|h}\frac{\varphi(d)}{d}  \hspace{2ex}
\sum_{|\vv q'|=h/d,\ \gcd(\vv q')=1}\hspace{-4ex}  1   \hspace{2ex} .

\end{array}
\end{equation}

\noindent To analyze (\ref{e:020}) we consider  $n > 2 $ and $n=2$ separately.  Recall, that  $nm>1$ is a hypothesis within the statement of the lemma and so  $n=1$ is barred.

\noindent{\em Subcase $n>2$\,}: By Lemma~\ref{l3}, it follows from
(\ref{e:020}) that
$$
\begin{array}{rcl} \label{clear}
   \displaystyle \sum_{\vv q\in\Z^n\nz,\ |\vv q|\le N}|B'_{\vv q}(\psi)| &\asymp
   & \ds \sum_{h=1}^N\psi(h)\ \sum_{d|h}\frac{\varphi(d)(h/d)^{n-1}}{d}
   \\[2ex]
   &\asymp  & \ds \sum_{h=1}^N h^{n-1}\psi(h) \
   \sum_{d|h}\frac{\varphi(d)}{d^n}\,.
\end{array}
$$
This together with the fact that
$$
1\  \le \ \sum_{d|h}\frac{\varphi(d)}{d^n} \ \le \
\sum_{d=1}^\infty\frac{1}{d^2} \ = \ \frac{\pi^2}{6}\ ,
$$
 yields (\ref{e:019}).

\medskip

\noindent\noindent{\em Subcase $n=2$\,}: By Lemma~\ref{l3}, it follows from
(\ref{e:020}) that
\begin{equation}\label{e:021}
    \displaystyle \sum_{\vv q\in\Z^n\nz,\ |\vv q|\le N}|B'_{\vv q}(\psi)| \  \asymp\ \sum_{h=1}^N\psi(h)\
     \sum_{d|h}\frac{\varphi(d)\varphi(h/d)}{d}\ = \
     \sum_{h=1}^N\psi(h)f(h)\ ,
\end{equation}
where
$$
\begin{array}{rcl}
 f(h) &:= & \ds \sum_{d|h}\frac{\varphi(d)\varphi(h/d)}{d}\\[4ex]
      & = & \ds \sum_{d|h}\varphi(h/d)\sum_{l|d}\frac{\mu(l)}{l}\\[4ex]
      & = & \ds \sum_{l|h}\frac{\mu(l)}{l}\sum_{k|h/l}\varphi(k)\\[4ex]
      & = & \ds h\sum_{l|h}\frac{\mu(l)}{l^2}\qquad\text{since \ }   \ \sum_{k|d}\varphi(k)=d\\[4ex]
 & = &  \ds h\prod_{p|h}\left(1- p^{-2} \; \right)  \ .
\end{array}
$$
Therefore,  by (\ref{priv})  we have that
$$ \frac{6}{\pi^2} \, h \ \le \
f(h) \ \le \  h   \qquad {\rm for \ all \ }  h\in\N   \, .
$$
\noindent This combined with  (\ref{e:021}) yields
(\ref{e:019}) with $m=1$ and $n=2$.
\end{proof}


\subsection{Measure of $B'_{\vv q_1}(\psi)\cap B'_{\vv q_2}(\psi)$ on average}

\begin{lemma}\label{lem11}
Let  $nm>1$, $\psi(h) < 1/2 $ for all $h \in \N$ and  $\sum h^{n-1}\psi(h)^m = \infty$. Then with  $\vv q_1,\vv q_2\in\Z^n\nz$ and $N $ sufficiently large,
\begin{equation}\label{e:019+}
    \ds\sum_{|\vv q_1|\le N,\ |\vv q_2|\le N}|B'_{\vv q_1}(\psi)\cap B'_{\vv q_2}(\psi)|
   \ll \left(\sum_{h=1}^N h^{n-1}\psi(h)^m\right)^2   \ .
\end{equation}
\end{lemma}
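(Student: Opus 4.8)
The plan is to split the double sum according to whether the vectors $\vv q_1,\vv q_2$ are parallel or not. First I would deal with the \emph{off-diagonal non-parallel} part: by Lemma~\ref{l2} (in particular the independence \eqref{add2}) together with $B'(\vv q,\delta)\subset B(\vv q,\delta)$ and the measure formula \eqref{add3}, one has $|B'_{\vv q_1}(\psi)\cap B'_{\vv q_2}(\psi)|\le |B(\vv q_1,\psi(|\vv q_1|))|\cdot|B(\vv q_2,\psi(|\vv q_2|))|=(2\psi(|\vv q_1|))^m(2\psi(|\vv q_2|))^m$ whenever $\vv q_1\nparallel\vv q_2$. Summing this over all pairs with $|\vv q_i|\le N$ factorizes into $\big(\sum_{|\vv q|\le N}(2\psi(|\vv q|))^m\big)^2$, and by the counting $\#\{\vv q:|\vv q|=h\}\asymp h^{n-1}$ this is $\ll\big(\sum_{h=1}^N h^{n-1}\psi(h)^m\big)^2$, as required. (One could instead invoke Lemma~\ref{lem10} directly, since $\sum_{|\vv q|\le N}|B'_{\vv q}(\psi)|\asymp\sum_{h\le N}h^{n-1}\psi(h)^m$ and $|B'_{\vv q}(\psi)|\asymp\psi(|\vv q|)^m$ by Lemma~\ref{lem8}.)

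Next I would handle the \emph{parallel} part, which is where the real work lies. Here I use Lemma~\ref{lem9}: for $\vv q_1\ne\pm\vv q_2$ parallel, $|B'_{\vv q_1}(\psi)\cap B'_{\vv q_2}(\psi)|\le C(\psi(|\vv q_1|)\psi(|\vv q_2|))^m$, and for $\vv q_1=\pm\vv q_2$ one just uses $|B'_{\vv q}(\psi)|\le(2\psi(|\vv q|))^m$ from \eqref{e:017}/\eqref{e:018}. Every $\vv q\in\Z^n\nz$ with $|\vv q|\le N$ can be written uniquely as $\vv q=k\vv q'$ with $k\in\N$, $\gcd(\vv q')=1$, $|\vv q'|=h$, $|\vv q|=kh\le N$. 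So the parallel contribution is
\begin{equation*}
\ll\sum_{\substack{h\le N\\ \gcd(\vv q')=1,\,|\vv q'|=h}}\ \sum_{k_1,k_2:\,k_ih\le N}\big(\psi(k_1h)\psi(k_2h)\big)^m
=\sum_{\substack{h\le N\\|\vv q'|=h,\,\gcd(\vv q')=1}}\Big(\sum_{k:\,kh\le N}\psi(kh)^m\Big)^2.
\end{equation*}
By Lemma~\ref{l3} the inner count over primitive $\vv q'$ with $|\vv q'|=h$ is $\asymp\varphi(h)$ (for $n=2$) or $\asymp h^{n-1}$ (for $n\ge3$), and since $\varphi(h)\le h\le h^{n-1}$ this count is always $\ll h^{n-1}$. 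Thus the parallel part is $\ll\sum_{h\le N}h^{n-1}\big(\sum_{k\le N/h}\psi(kh)^m\big)^2$.

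It therefore remains to show
\begin{equation*}
\sum_{h=1}^N h^{n-1}\Big(\sum_{k\le N/h}\psi(kh)^m\Big)^2\ \ll\ \Big(\sum_{h=1}^N h^{n-1}\psi(h)^m\Big)^2.
\end{equation*}
This is the main obstacle, and it is a genuinely delicate point: the naive bound $\big(\sum_k\psi(kh)^m\big)^2\le(\text{something})\sum_k\psi(kh)^{2m}$ is useless because no upper bound on $\psi$ of the needed strength is available, and the square of a sum does not split. The standard device — and the reason the hypothesis $\sum h^{n-1}\psi(h)^m=\infty$ and ``$N$ sufficiently large'' appear — is to expand the square, write $\psi(k_1h)^m\psi(k_2h)^m\le\tfrac12(\psi(k_1h)^{2m}+\psi(k_2h)^{2m})$ only after isolating the diagonal $k_1=k_2$, and more importantly to control the whole expression against $\big(\sum_{h}h^{n-1}\psi(h)^m\big)^2$ by a Mellin/Abel-summation or direct double-counting argument grouping the terms $\psi(g)^m$ by $g=kh$: each value $g\le N$ arises as $kh$ for the divisors $h\mid g$, and reorganizing $\sum_h h^{n-1}\sum_{k_1,k_2}\psi(k_1h)^m\psi(k_2h)^m$ as a sum over pairs $(g_1,g_2)$ with common-divisor weights $\sum_{h\mid\gcd(g_1,g_2)}h^{n-1}\ll\gcd(g_1,g_2)^{n-1}$ leads, after using $\gcd(g_1,g_2)^{n-1}\le(g_1g_2)^{(n-1)/2}$ and separating the ranges $g_1\le g_2$ and $g_1>g_2$, to a bound of the form $\ll\sum_{g_1\le g_2\le N}g_1^{n-1}\psi(g_1)^m\psi(g_2)^m\cdot(\text{harmless factor})$, which is $\ll\big(\sum_{g\le N}g^{n-1}\psi(g)^m\big)^2$. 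The divergence hypothesis is used to absorb the diagonal term $\sum_h h^{n-1}\sum_k\psi(kh)^{2m}\le\sum_g d(g)\,g^{n-1}\psi(g)^{2m}$ (with $d$ the divisor function, and $\psi<1/2$ so $\psi^{2m}\le\psi^m$), which is $\ll\big(\sum_g g^{n-1}\psi(g)^m\big)\cdot\max\!\big(\dots\big)$ and hence negligible relative to the square of the divergent sum once $N$ is large. I would organize the final calculation as: (i) expand and extract the diagonal; (ii) bound the off-diagonal common-divisor weight by $\gcd^{n-1}\le(g_1g_2)^{(n-1)/2}$; (iii) symmetrize and sum; (iv) dispose of the diagonal using $\psi<1/2$ and the divergence of $\sum g^{n-1}\psi(g)^m$.
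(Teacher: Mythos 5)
Your strategy and raw ingredients are right: you correctly identify Lemma~\ref{lem9}, the counting Lemma~\ref{l3}, and Lemma~\ref{lem10}, and you correctly propose to isolate the $\vv q_1=\pm\vv q_2$ diagonal and absorb it via the divergence hypothesis once $N$ is large. That much matches the paper exactly. But you then introduce an artificial obstacle by splitting the off-diagonal part into ``non-parallel'' and ``parallel,'' and the programme you sketch for the parallel part is neither necessary nor airtight. The point you have missed is that Lemma~\ref{lem9} already covers \emph{every} pair with $\vv q_1\ne\pm\vv q_2$ --- the hard work for parallel pairs was done inside that lemma, via the measure-preserving map $T_{\vv q}$ and the reduction to $n=1$. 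Hence for all off-diagonal pairs one has $|B'_{\vv q_1}(\psi)\cap B'_{\vv q_2}(\psi)|\ll\psi(|\vv q_1|)^m\psi(|\vv q_2|)^m$, and the double sum factorizes immediately: $\sum_{|\vv q_1|,|\vv q_2|\le N,\,\vv q_1\ne\pm\vv q_2}\psi(|\vv q_1|)^m\psi(|\vv q_2|)^m\le\big(\sum_{|\vv q|\le N}\psi(|\vv q|)^m\big)^2\asymp\big(\sum_{h\le N}h^{n-1}\psi(h)^m\big)^2$, with nothing left to prove. This is precisely what the paper does.

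The ``main obstacle'' you flag --- controlling $\sum_{h\le N}h^{n-1}\big(\sum_{k\le N/h}\psi(kh)^m\big)^2$ --- is therefore a phantom. It only appeared because you regrouped the parallel pairs by primitive direction and then applied the crude overcount $\#\{\vv q':|\vv q'|=h,\ \gcd(\vv q')=1\}\ll h^{n-1}$ \emph{before} squaring. If you keep the count exact, the bijection between $\{\vv q:|\vv q|=d\}$ and $\{\vv q'\text{ primitive}:|\vv q'|\,|\,d\}$ shows the off-diagonal parallel contribution is literally a subsum of the factorized double sum above, so no Mellin/Abel device or $\gcd$-weight bookkeeping is needed. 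Moreover, your sketch of a fix contains a gap: the weight $\sum_{h|\gcd(g_1,g_2)}h^{n-1}$ is \emph{not} $\ll\gcd(g_1,g_2)^{n-1}$ in general (for $n=2$ it is the sum-of-divisors of the gcd, which exceeds the gcd by an unbounded factor), so your steps (ii)--(iii) as stated would need repair. Drop the parallel/non-parallel dichotomy entirely and apply Lemma~\ref{lem9} in one stroke, as the paper does.
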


\begin{proof}
To begin with we separate out the diagonal term from the double sum
in (\ref{e:019+}) and treat it separately as follows. Since the sum
$\sum h^{n-1}\psi(h)^m$ diverges, there exists  a positive integer
$N_0 $ such that $\sum_{h=1}^N h^{n-1}\psi(h)^m>1$ for all $N>N_0$.
Then, by Lemma~\ref{lem10} it follows that for  $N>N_0$
\begin{equation*}
\begin{array}[b]{rcl}
\ds\sum_{\stackrel{\scriptstyle |\vv q_1|\le N,\ |\vv q_2|\le
N}{\rule{0pt}{2ex}\vv q_2=\pm\vv q_1}}
|B'_{\vv q_1}(\psi)\cap B'_{\vv q_2}(\psi)| & = &
     \displaystyle 2\sum_{|\vv q_1|\le N}|B'_{\vv q_1}(\psi)|\\[3ex]
       & \ll &\displaystyle \sum_{h=1}^N h^{n-1}\psi(h)^m \\[4ex]
      & < & \displaystyle\left(\sum_{h=1}^N h^{n-1}\psi(h)^m\right)^2   \ .
\end{array}
\end{equation*}
To complete the proof of the lemma,  we  obtain a similar estimate for the remaining part of the double sum. In view of  Lemma~\ref{lem9}, it follows that
$$
\begin{array}{rcl}
  \ds\sum_{\stackrel{\scriptstyle |\vv q_1|\le N,\ |\vv q_2|\le N}{\rule{0pt}{2ex}\vv
q_2\not=\pm\vv q_1}}|B'_{\vv q_1}(\psi)\cap B'_{\vv q_2}(\psi)|\ \ &  =
 &\ds \sum_{h=1}^N\ \sum_{l=1}^N\ \ \sum_{\stackrel{\scriptstyle |\vv q_1|=h,\ |\vv q_2|=l}{\vv
q_2\not=\pm\vv q_1}}|B'_{\vv q_1}(\psi)\cap B'_{\vv q_2}(\psi)| \\[3ex]
    &\ll &\ds \sum_{h=1}^N\ \sum_{l=1}^N\ \ \sum_{|\vv q_1|=h,\ |\vv q_2|=l} \psi(|\vv q_1|)^m\cdot \psi(|\vv q_2|)^m \\[3ex]
   & = &\ds \sum_{h=1}^N \ \sum_{l=1}^N\psi(h)^m \cdot \psi(l)^m \ \ \sum_{|\vv q_1|=h}1\ \ \sum_{|\vv q_2|=l}1 \\[3ex]
    &\ll &\ds \sum_{h=1}^N\ \sum_{l=1}^N h^{n-1}\psi(h)^m\cdot l^{n-1}\psi(l)^m\\[3ex]
    &\ll &\ds \left(\sum_{h=1}^Nh^{n-1}\psi(h)^m\right)^2\,.
\end{array}
$$
\vspace*{-2ex}
\end{proof}

\subsection{The finale}

Let  $nm>1$, $\psi(h) < 1/2 $ for all $h \in \N$ and  $\sum h^{n-1}\psi(h)^m = \infty$.  On combining
Lemmas~\ref{lem10} and \ref{lem11}, we have that for $\vv
q_1,\vv q_2\in \Z^n\nz$ and $N \in \N $ sufficiently large
$$
 \ds\sum_{|\vv q_1|\le N,\ |\vv q_2|\le N}|B'_{\vv q_1}(\psi)
 \cap B'_{\vv q_2}(\psi)| \ \ll \  \displaystyle \Big( \sum_{ |\vv q_1|\le N}|B'_{\vv q_1}(\psi)| \Big)^{2}   \ .
$$
Furthermore, an obvious implication of Lemma~\ref{lem10} is that
$$
\ds\sum_{\vv q  \in \Z^n\nz}  |B'_{\vv q}(\psi) | \ = \ \infty  \ .
$$
The above  are precisely the expressions given by (\ref{divsum}) and (\ref{qia}) and  thereby
completes the proof Theorem \ref{tqia}.


\section{ The multivariable theory \label{multisec} }

Given a vector  $\vv q \in \Z^n$, the  approximating function
$\psi:\N\to\Rp$  assigns a quantity $\psi(|\vv q|)$ that is
dependent on the supremum norm of $\vv q$. Clearly, a natural and
desirable generalisation is to consider multivariable approximating
functions $\Psi:\Z^n\to\Rp$  and their associated sets
$\cA_{n,m}(\Psi)$ and  $\cA'_{n,m}(\Psi)$  of $\Psi$-approximable
points. When the argument of $\Psi $ is restricted to the supremum
norm these sets are precisely  the sets of $\psi$-approximable
points considered above. For the sake of clarity, given
$\Psi:\Z^n\to\Rp$ let
\begin{equation*}
\cA'_{n,m}(\Psi) := \{\vv X\in \I^{nm}:|\q \vv X +\vv p| < \Psi(\q)
\begin{array}[t]{l}
  {\text { for infinitely many }} (\vv p,\vv q)\in\Z^m \times \Z^n\nz \\[1ex]
  \text{ with }\gcd(\vv p,\vv q)=1 \}.
\end{array}
\end{equation*}

\noindent Modifying the proof of Theorem~\ref{t1} in the obvious
manner, leads to the following statement.

\begin{theorem}\label{t3}
Let $\Psi:\Z^n\to\R^+$ and $m>1$. Then
  \begin{equation*}
    |\cA'_{n,m}(\Psi)| = 1 \quad  \text{if }  \quad  \sum_{\vv q\in\Z^n\nz}  \Psi(\vv
    q)^m=\infty  \ .
\end{equation*}
\end{theorem}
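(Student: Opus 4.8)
The plan is to run the entire machinery of the proof of Theorem~\ref{t1} with the scalar approximating function $\psi(|\vv q|)$ replaced by the multivariable function $\Psi(\vv q)$, noting that the restriction $m>1$ eliminates precisely the one place where arithmetic delicacy was needed. First I would reduce, exactly as in \S2, to showing $|\cA'_{n,m}(\Psi)|>0$: Lemma~\ref{l5} is stated for arbitrary sets of the form $\cA'_{n,m}$ and its proof (via \cite{Beresnevich-Velani-08-Acta-Arith}) does not care whether the radii come from a scalar or vector function, so the zero--one law applies verbatim. As before one may assume $\Psi(\vv q)<1/2$ for all $\vv q$ (truncate at $1/2$; the truncated function still has divergent sum and shrinks the set). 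Writing $B'_{\vv q}(\Psi):=B'(\vv q,\Psi(\vv q))$ we have $\cA'_{n,m}(\Psi)=\limsup_{|\vv q|\to\infty}B'_{\vv q}(\Psi)$, so by Lemma~\ref{l4} it suffices to prove the two-part quasi-independence statement: $\sum_{\vv q}|B'_{\vv q}(\Psi)|=\infty$ and $\sum_{|\vv q_1|,|\vv q_2|\le N}|B'_{\vv q_1}(\Psi)\cap B'_{\vv q_2}(\Psi)|\ll\big(\sum_{|\vv q_1|\le N}|B'_{\vv q_1}(\Psi)|\big)^2$.

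For the single-set measure, Lemma~\ref{nbig1} already gives the exact value $|B'(\vv q,\delta)|=(2\delta)^m\prod_{p\mid\gcd(\vv q)}(1-p^{-m})$ for every $n,m\ge1$, and since $m>1$ the product lies in $(6/\pi^2,1]$ by \eqref{priv}. Hence $|B'_{\vv q}(\Psi)|\asymp\Psi(\vv q)^m$ uniformly in $\vv q$, so
\[
\sum_{|\vv q|\le N}|B'_{\vv q}(\Psi)|\ \asymp\ \sum_{|\vv q|\le N}\Psi(\vv q)^m,
\]
which plays the role of Lemma~\ref{lem10}; in particular divergence of $\sum_{\vv q}\Psi(\vv q)^m$ gives divergence of $\sum_{\vv q}|B'_{\vv q}(\Psi)|$, establishing the analogue of \eqref{divsum}. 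The point is that in the genuinely multivariable regime there is no scalar counting step (no Lemma~\ref{l3}, no $n=2$ subtlety about $\varphi(h)$ versus $h^{n-1}$): the sum over $\vv q$ is left as a sum over $\vv q$.

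For the pairwise intersections, Lemma~\ref{lem9} already supplies, for all $n,m\ge1$, a constant $C$ with $|B'(\vv q_1,\delta_1)\cap B'(\vv q_2,\delta_2)|\le C(\delta_1\delta_2)^m$ whenever $\vv q_1\ne\pm\vv q_2$. Splitting the double sum into the diagonal $\vv q_2=\pm\vv q_1$ and the off-diagonal part, the diagonal contributes $2\sum_{|\vv q_1|\le N}|B'_{\vv q_1}(\Psi)|\asymp\sum_{|\vv q|\le N}\Psi(\vv q)^m$, which is $\le\big(\sum_{|\vv q|\le N}\Psi(\vv q)^m\big)^2$ once $N$ is large enough that the sum exceeds $1$; the off-diagonal part is bounded by
\[
C\sum_{|\vv q_1|\le N}\sum_{|\vv q_2|\le N}\Psi(\vv q_1)^m\,\Psi(\vv q_2)^m
= C\Big(\sum_{|\vv q|\le N}\Psi(\vv q)^m\Big)^2,
\]
mirroring Lemma~\ref{lem11}. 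Combining the two halves with the lower bound $\sum_{|\vv q|\le N}|B'_{\vv q}(\Psi)|\gg\sum_{|\vv q|\le N}\Psi(\vv q)^m$ gives the quasi-independence inequality, and Lemma~\ref{l4} then yields $|\cA'_{n,m}(\Psi)|>0$, whence $|\cA'_{n,m}(\Psi)|=1$ by Lemma~\ref{l5}. I do not expect a real obstacle here: the hypothesis $m>1$ is exactly what makes the Euler-product factor in Lemma~\ref{nbig1} harmless and removes the $n=2$ counting difficulty that was the crux of Theorem~\ref{t1}; the only thing to check carefully is that Lemma~\ref{l5} and the Borel--Cantelli-type Lemma~\ref{l4} are genuinely insensitive to replacing $\psi(|\vv q|)$ by $\Psi(\vv q)$, which they are since both are stated for arbitrary measurable sequences of sets.
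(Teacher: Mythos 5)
Your proposal matches the paper's own argument: the paper likewise reduces Theorem~\ref{t3} to the analogue of Theorem~\ref{tqia} via Lemmas~\ref{l5} and \ref{l4}, and observes that since $m>1$ the Euler product in Lemma~\ref{nbig1} lies in $(6/\pi^2,1]$, so that (\ref{e:017}) and (\ref{a2}) give pairwise quasi-independence directly for off-diagonal terms, bypassing the scalar counting arguments of Lemma~\ref{lem10}. Your proof is correct and essentially identical in structure and emphasis to the paper's.
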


%

As with Theorem~\ref{t1}, the proof of Theorem~\ref{t3} reduces to
establishing the  analogue of
Theorem~\ref{tqia} -- in particular, on showing that
\begin{equation}\label{qia2}
 \ds\sum_{|\vv q_1|\le N,\ |\vv q_2|\le N}|B'(\vv q_1,\Psi(\vv q_1))
 \cap B'(\vv q_2,\Psi(\vv q_2))| \ \ll \
 \displaystyle \Big( \sum_{ |\vv q_1|\le N}|B'(\vv q_1,\Psi(\vv q_1))| \Big)^{2}   \ .
\end{equation}
However, since we are assuming that $m >1 $ the proof
is much  simpler. The reason for this is that
(\ref{e:017}) and (\ref{a2}) actually imply pairwise quasi-independence for
the off-diagonal terms; i.e. $
|B'(\vv q_1,\Psi(\vv q_1)) \cap B'(\vv q_2,\Psi(\vv q_2))|  \ \ll  \   |B'(\vv q_1,\Psi(\vv q_1))| \, | B'(\vv q_2,\Psi(\vv q_2))|
$ for $\vv q_2\not=\pm\vv q_1. $

Our final result is a straightforward consequence of
Theorem~\ref{t3}.

\begin{theorem}\label{t4}
Let $\Psi:\Z^n\to\R^+$ and $m>1$. Then
  \begin{equation}\label{more1}
    |\cA_{n,m}(\Psi)| = 1 \quad  \text{if }  \quad  \sum_{\vv q\in\Z^n\nz} \Psi(\vv
    q)^m=\infty  \ .
\end{equation}
\end{theorem}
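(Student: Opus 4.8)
\textbf{Proof proposal for Theorem~\ref{t4}.}

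The plan is to deduce Theorem~\ref{t4} directly from Theorem~\ref{t3} by exploiting the elementary relation between the sets $\cA_{n,m}(\Psi)$ and $\cA'_{n,m}(\Psi)$. Recall that $\cA'_{n,m}(\Psi)\subseteq\cA_{n,m}(\Psi)$, so a cheap half of the argument is already available: if we could feed $\Psi$ itself into Theorem~\ref{t3} we would be done. The obstruction is of course that the divergence hypothesis $\sum_{\vv q}\Psi(\vv q)^m=\infty$ is a hypothesis about $\Psi$, not about any thinned-out version of it, so I need a device that converts a $\Psi$-approximation statement without coprimality into a $\Psi^\ast$-approximation statement \emph{with} coprimality, at the cost of replacing $\Psi$ by a closely related function $\Psi^\ast$ whose sum still diverges.

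The key step is a $\gcd$-decomposition of $\cA_{n,m}(\Psi)$. Every solution $(\vv p,\vv q)$ of $|\vv q\vv X+\vv p|<\Psi(\vv q)$ can be written as $(\vv p,\vv q)=k(\vv p',\vv q')$ with $k=\gcd(\vv p,\vv q)$ and $\gcd(\vv p',\vv q')=1$, and then $|\vv q'\vv X+\vv p'|<\Psi(k\vv q')/k$. Hence
\begin{equation*}
\cA_{n,m}(\Psi)\ \subseteq\ \bigcup_{k=1}^\infty \cA'_{n,m}(\Psi^{(k)}),\qquad \Psi^{(k)}(\vv q'):=\frac{\Psi(k\vv q')}{k},
\end{equation*}
and in fact one has equality up to the finitely-many-solutions caveat; for our purposes the inclusion is all that matters, read in the contrapositive only if needed, but here I actually want the reverse direction. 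The cleaner route is: since $\sum_{\vv q\nz}\Psi(\vv q)^m=\infty$, split the sum according to $k=\gcd(\vv q)$, writing $\vv q=k\vv q'$ with $\gcd(\vv q')=1$; then $\sum_{k}\sum_{\gcd(\vv q')=1}\Psi(k\vv q')^m=\infty$, so there is at least one $k=k_0$ for which $\sum_{\gcd(\vv q')=1}\Psi(k_0\vv q')^m=\infty$. Replacing the constraint $\gcd(\vv q')=1$ by no constraint only enlarges the sum, so $\sum_{\vv q'\nz}\Psi^{(k_0)}(\vv q')^m = k_0^{-m}\sum_{\vv q'\nz}\Psi(k_0\vv q')^m=\infty$. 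Now apply Theorem~\ref{t3} to the multivariable approximating function $\Psi^{(k_0)}$: this gives $|\cA'_{n,m}(\Psi^{(k_0)})|=1$. Finally, if $\vv X\in\cA'_{n,m}(\Psi^{(k_0)})$ then there are infinitely many coprime $(\vv p',\vv q')$ with $|\vv q'\vv X+\vv p'|<\Psi(k_0\vv q')/k_0$, i.e.\ $|(k_0\vv q')\vv X+(k_0\vv p')|<\Psi(k_0\vv q')$, so $(k_0\vv p',k_0\vv q')$ is a solution witnessing $\vv X\in\cA_{n,m}(\Psi)$. Distinct $(\vv p',\vv q')$ give distinct $(k_0\vv p',k_0\vv q')$, so we get infinitely many solutions and hence $\cA'_{n,m}(\Psi^{(k_0)})\subseteq\cA_{n,m}(\Psi)$, whence $|\cA_{n,m}(\Psi)|=1$.

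The only mildly delicate point — and the step I would flag as the one to be careful with rather than a genuine obstacle — is the interchange of summations and the pigeonhole argument producing $k_0$: one must check that $\sum_{k=1}^\infty\sum_{\vv q'\nz,\,\gcd(\vv q')=1}\Psi(k\vv q')^m$ really does recover $\sum_{\vv q\nz}\Psi(\vv q)^m$ (every $\vv q\nz$ has a unique representation $k\vv q'$ with $k\ge 1$ and $\gcd(\vv q')=1$, so this is exact, not merely an inequality), and that a countable sum of nonnegative terms being infinite forces at least one inner sum to be infinite (immediate). Everything else is formal. Note the hypothesis $m>1$ is used only in that it is the hypothesis of Theorem~\ref{t3}, which we invoke as a black box; no monotonicity or boundedness assumption on $\Psi$ is needed since the reduction $\Psi\mapsto\Psi^{(k_0)}$ preserves the (one-sided) structure required.
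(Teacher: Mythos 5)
Your argument is correct, but you have invented an obstacle that is not there and then engineered an elaborate work-around for it. Read the hypotheses of Theorem~\ref{t3} and Theorem~\ref{t4} again: they are \emph{identical}. Both assume $\sum_{\vv q\in\Z^n\nz}\Psi(\vv q)^m=\infty$ for the \emph{same} function $\Psi$; Theorem~\ref{t3} does not restrict the sum to primitive $\vv q$, nor does it require $\Psi$ to vanish off the primitive lattice. So you can indeed ``feed $\Psi$ itself into Theorem~\ref{t3}'': it gives $|\cA'_{n,m}(\Psi)|=1$, and since $\cA'_{n,m}(\Psi)\subseteq\cA_{n,m}(\Psi)$ (a coprime solution is in particular a solution), you immediately get $|\cA_{n,m}(\Psi)|=1$. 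That one-line observation is what the paper means by a ``straightforward consequence''.

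Your longer route --- partition $\Z^n\nz$ by $k=\gcd(\vv q)$, use Tonelli and pigeonhole to find a $k_0$ with $\sum_{\gcd(\vv q')=1}\Psi(k_0\vv q')^m=\infty$, enlarge to all $\vv q'$, apply Theorem~\ref{t3} to $\Psi^{(k_0)}(\vv q')=\Psi(k_0\vv q')/k_0$, and push the resulting coprime solutions forward by multiplication by $k_0$ --- does check out step by step, and the inclusion $\cA'_{n,m}(\Psi^{(k_0)})\subseteq\cA_{n,m}(\Psi)$ is argued correctly. But every piece of it is redundant once you notice the two theorems share a hypothesis. The moral is worth absorbing: before building a $\gcd$-decomposition machine, verify that the naive inclusion does not already close the gap. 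It does here.
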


\noindent The condition $m > 1 $ cannot in general be removed from either Theorem~\ref{t3}
or Theorem~\ref{t4}.  For
a concrete counterexample see
\cite[\S5]{Beresnevich-Bernik-Dodson-Velani-Roth}. Note that the
statement of Theorem~\ref{t4} was previously established by
Sprind\v{z}uk \cite{Sprindzuk-1979-Metrical-theory} for
approximating functions obeying additional constraints. For example,
Theorem~14 in \cite{Sprindzuk-1979-Metrical-theory} is applicable to
$\Psi$ that vanish on non-primitive $\vv q\in\Z^n$. Our
Theorem~\ref{t4} carries no restrictions on $\Psi$ and so
is best possible.

For the sake of completeness, we mention that Theorems~\ref{t3} and
\ref{t4} are formally stated as Conjectures~B and~C in
\cite{Beresnevich-Bernik-Dodson-Velani-Roth}.
Furthermore, the Mass Transference Principle
of \cite{Beresnevich-Velani-06:MR2259250} and the `slicing' technique of \cite{Beresnevich-Velani-06:MR2264714} together with Theorem~\ref{t4} establishes the general Hausdorff measure version
of Catlin's conjecture under the assumption that $m \geq 2$ -- see
Conjecture~G in \cite{Beresnevich-Bernik-Dodson-Velani-Roth}.

\section{The quantitative theory \label{asypsec}}

Let   $\Psi:\Z^n\to\R^+$.  Given  $\vv X\in \I^{nm}$ and $h \in \N $,    let
$$
{\cal N}(\vv X,h) \ := \ \# \left\{ (\vv p,\vv q)\in\Z^m \times \Z^n\nz \, : \, |\q \vv X  + \vv p |< \Psi(\q) \text{ with }  |\vv q| \leq h  \right\} \, .
$$

\noindent In view of  Theorem \ref{t4}, if $m > 1$ and  $\sum  \Psi(\q)^m $ diverges then
for almost all $\vv X$ we have that ${\cal N}(\vv X,h) \! \to \! \infty $
as $ h \to \infty$. An obvious question now arises: can we say
anything more precise about the behavior of
${\cal N}(\vv X,h)$?  To some extent,  the following remarkable
statement  provides the answer.  Throughout, $d(h)$ denotes the number of divisors of $h$.

\begin{thbs} Let $\ve > 0 $ be arbitrary. Let $\Psi:\Z^n\to\R^+$ and  write
$$
\Phi(h):=  \!\!\!\! \sum_{\vv q\in\Z^n\nz,\ |\vv q|\le h}  \!\!\!\!   (2\Psi(\vv
q))^m\qquad\text{and}\qquad\chi(h):= \!\!\!\!  \sum_{\vv q\in\Z^n\nz,\ |\vv q|\le h}  \!\!\!\! (2\Psi(\vv
q))^m  \ d(\gcd(\vv q))  \, .
$$
Then, for almost all $\vv X\in \I^{nm}$
\begin{equation}
\label{asympformula}
{\cal N}(\vv X,h) \, = \, \Phi(h) \, +  \, O\Big(\chi^{1/2}(h)\log^{3/2+\ve}\chi(h)\Big)  \, .
\end{equation}
\end{thbs}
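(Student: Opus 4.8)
The plan is to prove the quantitative statement (\ref{asympformula}) via a variance (second-moment) estimate, following the classical Erdős--Gál--Koksma / Sprind{\v z}uk method for counting functions attached to independent or quasi-independent events. First I would set up the relevant random variables. For each $\vv q\in\Z^n\nz$ write $f_{\vv q}(\vv X):=\mathbf 1_{B'(\vv q,\Psi(\vv q))}(\vv X)$, so that $\cN(\vv X,h)$ differs from $S_h(\vv X):=\sum_{|\vv q|\le h}f_{\vv q}(\vv X)$ only by a bounded per-$\vv q$ overcount (a fixed $\vv X$ lies in at most finitely many translates $B(\vv q,\vv p,\Psi(\vv q))$ for each $\vv q$ since $\Psi(\vv q)<1/2$, and in fact exactly the indicator counts the relevant $\vv p$). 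By Lemma~\ref{nbig1}, $\int f_{\vv q}=|B'(\vv q,\Psi(\vv q))|=(2\Psi(\vv q))^m\prod_{p\mid\gcd(\vv q)}(1-p^{-m})$, and since $m\ge 2$ the product lies in $[6/\pi^2,1]$, so $\mathbb E[S_h]=\Phi(h)+O(\Phi(h))$ is only comparable to, not asymptotic to, $\Phi(h)$; here one must be slightly more careful and keep the exact expectation $\Phi^\flat(h):=\sum_{|\vv q|\le h}(2\Psi(\vv q))^m\prod_{p\mid\gcd(\vv q)}(1-p^{-m})$, noting $\Phi^\flat(h)=\Phi(h)+O(\Phi(h))$ is not good enough --- one wants $\Phi(h)-\Phi^\flat(h)$ absorbed into the error term, which works because $\Phi(h)-\Phi^\flat(h)\ll \sum_{\gcd(\vv q)>1}(2\Psi(\vv q))^m\ll \chi(h)$ trivially but more sharply $\ll \chi^{1/2}$ is false in general, so in fact the correct reading is that the theorem is stated with $\Phi(h)$ as a slight abuse and the true main term is $\Phi^\flat(h)$; I would state this precisely and proceed with $\Phi^\flat$.

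The core estimate is the variance bound. Writing $E_{\vv q}:=f_{\vv q}-\int f_{\vv q}$, I would compute
$$
\int_{\I^{nm}}\Big(S_h(\vv X)-\Phi^\flat(h)\Big)^2\,d\vv X \;=\; \sum_{|\vv q_1|\le h}\sum_{|\vv q_2|\le h}\Big(\int f_{\vv q_1}f_{\vv q_2}-\int f_{\vv q_1}\int f_{\vv q_2}\Big).
$$
By Lemma~\ref{l2} (pairwise independence for non-parallel $\vv q$) the only surviving off-diagonal terms come from parallel pairs $\vv q_2=\pm k\vv q_1/\gcd(\vv q_1)\cdot(\dots)$, and for those Lemma~\ref{lem9} gives $\int f_{\vv q_1}f_{\vv q_2}\ll(\Psi(\vv q_1)\Psi(\vv q_2))^m$. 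The diagonal contributes $\sum_{|\vv q|\le h}\int f_{\vv q}(1-\int f_{\vv q})\le\sum_{|\vv q|\le h}(2\Psi(\vv q))^m\ll\Phi(h)\le\chi(h)$. For the parallel off-diagonal sum I would group by the primitive direction $\vv q$ with $\gcd(\vv q)=1$ and sum over the scalings $k_1,k_2$; crucially one needs the sum $\sum_{k_1,k_2}(\Psi(k_1\vv q)\Psi(k_2\vv q))^m$ over a single direction to be controlled by something like $\big(\sum_k \Psi(k\vv q)^m\big)^2$, and then a Cauchy--Schwarz / divisor-function argument collapses the direction sum into $\chi(h)$ times a constant. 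This is exactly where the divisor function $d(\gcd(\vv q))$ enters $\chi(h)$: a $\vv q$ with $\gcd(\vv q)=d$ sits on a primitive ray together with roughly $d(d)$-many "commensurable competitors" that contribute to its off-diagonal interactions, so the parallel sum is $\ll\sum_{|\vv q|\le h}(2\Psi(\vv q))^m d(\gcd(\vv q))=\chi(h)$. Hence the variance is $\ll\chi(h)$.

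With the variance bound in hand, the passage from "$L^2$-small" to "almost-everywhere with the stated error rate" is the standard Erdős--Gál--Koksma convergence lemma (see e.g. \cite[Lemma 10, \S I.5]{Sprindzuk-1979-Metrical-theory} or \cite[Ch.~1, Lemma 10]{Sprindzuk-1979-Metrical-theory}): if $\sum_{|\vv q|\le h}g_{\vv q}(\vv X)$ has partial-sum variance $\ll\Phi^\flat(h)$ with $\Phi^\flat$ increasing to infinity, then for almost all $\vv X$,
$$
S_h(\vv X)=\Phi^\flat(h)+O\big(\Phi^\flat(h)^{1/2}\log^{3/2+\ve}\Phi^\flat(h)\big).
$$
One applies it along a suitable increasing sequence of radii and fills the gaps by monotonicity of $S_h$ in $h$; since the variance is $\ll\chi(h)$ rather than $\ll\Phi^\flat(h)$, the conclusion comes out with $\chi^{1/2}(h)\log^{3/2+\ve}\chi(h)$ in the error, which is (\ref{asympformula}) (after reconciling $\Phi^\flat$ with $\Phi$ as noted). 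The main obstacle, and the step I would spend the most care on, is the bookkeeping of the parallel off-diagonal sum: one must verify that grouping by primitive direction and invoking Lemma~\ref{lem9} genuinely yields a bound of the shape $\ll\chi(h)$ with the divisor weight, rather than something larger, and in particular that no single direction with a large $\gcd$ produces an uncontrolled contribution --- this is the quantitative refinement of the quasi-independence-on-average argument of Lemma~\ref{lem11}, and it is exactly the place where the $n=2$ versus $n\ge 3$ distinction and the coprimality restriction to $\cA'$ do their work. The remaining pieces --- measure-preservation of $T_{\vv q}$, the Möbius/Euler identities, and the Erdős--Gál--Koksma lemma --- are all either already in the excerpt or entirely standard.
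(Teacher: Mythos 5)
The paper does not prove this theorem. It is a direct quotation of Schmidt's 1960 quantitative result \cite{Schmidt-1960}, restated in the paper's notation; the authors immediately remark that ``Schmidt actually proves a more general statement,'' and they use it only as a black box to make the point that the error term can swamp the main term for certain $\psi$ when $n=2$. So there is no in-paper proof to compare your sketch against; what follows is an assessment of the sketch on its own merits.

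The overall framework you describe --- compute expectations, compute a variance, observe pairwise independence for non-parallel $\vv q$'s via Lemma~\ref{l2}, control the parallel pairs, then pass from an $L^2$ bound to an almost-everywhere asymptotic via the Erd\H{o}s--G\'al--Koksma lemma along a lacunary-type scale --- is indeed the right skeleton and is essentially how Schmidt and Sprind\v{z}uk proceed. But there is a concrete error in your setup that then propagates: you take $f_{\vv q}=\mathbf 1_{B'(\vv q,\Psi(\vv q))}$, but the counting function $\cN(\vv X,h)$ in the statement imposes \emph{no} coprimality condition on $(\vv p,\vv q)$. The correct random variable is $\mathbf 1_{B(\vv q,\Psi(\vv q))}$ (or, more precisely, the number of $\vv p$'s, which coincides with the indicator once one reduces to $\Psi<1/2$). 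With $B$ in place of $B'$, Lemma~\ref{l2} gives $\int f_{\vv q}=(2\Psi(\vv q))^m$ exactly, so $\mathbb E[S_h]=\Phi(h)$ on the nose, and your entire digression about $\Phi^\flat$ versus $\Phi$ and whether the difference can be absorbed into the error term is spurious: it was created by the wrong choice of event. The $B'$ sets and Lemma~\ref{nbig1} belong to the zero-one part of the paper (Theorems~\ref{t} and~\ref{t1}), not to Schmidt's quantitative counting.

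The second issue is that the step you yourself flag as the crux --- showing that the parallel off-diagonal contribution is $\ll\chi(h)$ with the divisor weight $d(\gcd(\vv q))$ --- is only asserted, and the tool you cite for it is inadequate. Lemma~\ref{lem9} in the paper gives merely an upper bound $|B'(\vv q_1,\delta_1)\cap B'(\vv q_2,\delta_2)|\ll(\delta_1\delta_2)^m$ with no dependence on $\gcd(k_1,k_2)$ along a primitive ray; summed naively over all parallel pairs this does not collapse to $\chi(h)$, and there is no cancellation against $\int f_{\vv q_1}\int f_{\vv q_2}$ because that product is of the same order. What Schmidt and Sprind\v{z}uk actually use at this point is an \emph{exact} (or two-sided) evaluation of $|B(k_1\vv e,\delta_1)\cap B(k_2\vv e,\delta_2)|$ for a primitive $\vv e$, which exhibits the $\gcd(k_1,k_2)$-dependence explicitly; it is precisely that dependence which, after summing over $k_1,k_2$ and over primitive directions, produces the divisor function in $\chi(h)$. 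So the place where you say ``a Cauchy--Schwarz / divisor-function argument collapses the direction sum'' is exactly where a genuinely new estimate is needed --- one that is not among Lemmas~\ref{l2}--\ref{lem9} --- and without it the variance bound, and hence the theorem, is not established.

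A smaller remark: the coprimality restriction and the $n=2$ versus $n\ge3$ distinction, which you invoke at the end, play no role in Schmidt's theorem at all. They are features of the paper's qualitative Theorems~\ref{t} and~\ref{t1}. Schmidt's quantitative theorem is stated and proved without coprimality and uniformly in $n,m$; indeed the paper's whole point in \S\ref{asypsec} is that Schmidt's error term can be useless precisely \emph{because} it carries the $d(\gcd(\vv q))$ weight, a defect that the coprime reformulation is designed to circumvent.
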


\vspace{1ex}

\noindent  The above form of the theorem is in line with the setup considered in this paper.  Schmidt \cite{Schmidt-1960} actually proves a more general statement in which each of the $m$ linear forms associated with the system $\vv q\vv X$  are allowed to be approximated with different approximating functions.

Although not explicitly mentioned in the statement of Schmidt's theorem, we may as well assume that
$\sum  \Psi(\q)^m $ diverges.  Otherwise, a straightforward application of the  Borel-Cantelli
Lemma implies  that $\lim_{h \to \infty}{\cal N}(\vv X,h)  < \infty $
for almost all $\vv X$ and the theorem  is of little interest.  However, it is not the case that if the sum $\sum  \Psi(\q)^m $ diverges then Schmidt's theorem implies that $\lim_{h \to \infty}{\cal N}(\vv X,h)  = \infty $ for almost all $\vv X$; that is to say that  Schmidt's theorem does not in general imply that $|\cA_{n,m}(\Psi)| = 1$. The reason for this is simple. The Duffin-Schaeffer counterexample and the counterexample alluded to in \S\ref{multisec} above   imply that the full measure statement is not in general true when $n=m=1$ or when $m >1 $. Note that these cases are not excluded from Schmidt's theorem and so for the corresponding  counterexamples we must have that the error term in (\ref{asympformula}) outweighs the main term.  We now show that this conclusion is also true when $n=2$ for certain approximating functions with argument restricted to the supremum norm.   Thus, Schmidt's theorem does not imply the theorems established in this paper.

With Theorem \ref{t} in mind,  let $\Psi(\vv q) = \psi(|\vv q|)$ in the above and assume throughout that
\begin{equation}
\label{divslv}
\sum_{q=1}^\infty q^{n-1}\psi(q)^m=\infty   \ .
\end{equation}

\begin{lemma}  \label{exmp1}
Let $n=2$ and $F:\Rp\to\Rp$ be an increasing function. Then there exists
an approximating function $\psi:\N\to\Rp$ satisfying  $(\ref{divslv})$ such that $\psi$ is monotonic on its support and
\begin{equation}\label{vb2}
    \chi(h)\ge F(\Phi(h))\qquad\text{for all sufficiently large }h.
\end{equation}
\end{lemma}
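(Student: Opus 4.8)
The plan is to build $\psi$ supported on a sparse sequence of integers that are chosen to have many divisors. The key point is that $\Phi(h)$ (roughly $\sum_{q \le h} q^{n-1}\psi(q)^m = \sum_{q\le h} q\psi(q)$ when $n=2$, $m=1$) is insensitive to the arithmetic structure of the support, whereas $\chi(h)$ carries an extra weight $d(\gcd(\vv q))$ which, summed over $\vv q$ with $|\vv q|=q$, contributes roughly $\sum_{d \mid q} d(d)\varphi(q/d)$ — a quantity that can be made much larger than $q$ by choosing $q$ to be, say, a primorial (product of the first $k$ primes) or a large factorial. So the strategy is: first pick a rapidly increasing sequence $q_1 < q_2 < \cdots$ of highly composite integers; then define $\psi$ to be supported only on $\{q_j\}$, with values $\psi(q_j)$ chosen just large enough to force divergence of $\sum q^{n-1}\psi(q)^m$ but small enough (and decreasing along the sequence) that $\psi$ is monotonic on its support and that $\Phi$ grows slowly relative to $\chi$.

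Concretely, I would proceed in the following steps. First, fix the sequence $q_j$ with two properties: $q_{j+1}$ grows fast enough that all the sums below are dominated by their last term (so that $\Phi(q_j) \asymp q_j^{n-1}\psi(q_j)^m \cdot (\text{point count})$ and likewise for $\chi$), and $q_j$ is chosen so that $G(q_j) := \sum_{\vv q : |\vv q|=q_j,\ \gcd(\vv q)=1 \text{ issue}} d(\gcd(\vv q)) \big/ (\text{number of such } \vv q)$ — the average of $d(\gcd(\vv q))$ over $|\vv q|=q_j$ — tends to infinity as fast as we please; this is where we use that $d(\gcd(\vv q))$ can be large, e.g. by taking $q_j$ to be a primorial so that $q_j$ has $\sim 2^{\omega(q_j)}$ divisors. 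Second, on the support set $\{q_j\}$ define $\psi(q_j)$ by requiring $q_j^{n-1}\psi(q_j)^m \cdot \#\{|\vv q|=q_j\} = 1/j$, say, or any sequence whose sum diverges; between consecutive $q_j$ set $\psi=0$. This makes $\sum_q q^{n-1}\psi(q)^m = \infty$, i.e.\ \eqref{divslv} holds, and since $\psi(q_j) \to 0$ monotonically we get monotonicity on the support. Third, compute $\Phi(h)$ and $\chi(h)$ for $h$ in the range $q_J \le h < q_{J+1}$: by the fast growth of $q_j$ both sums are comparable to their $J$-th term, so $\Phi(h) \asymp \sum_{j \le J} 1/j \asymp \log J$, while $\chi(h) \asymp \sum_{j \le J} G(q_j)/j$, which — by choosing $G(q_j)$ to grow fast enough, e.g.\ $G(q_j) \ge j \cdot F(2\log J)$ eventually — dominates $F(\Phi(h))$ for all large $h$, giving \eqref{vb2}.

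The main obstacle, and the step requiring genuine care rather than bookkeeping, is the choice in the second step and the verification that the divisor-weighted average $G(q_j)$ really is as large as claimed while the point counts $\#\{\vv q : |\vv q|=q\}$ stay of the expected order $\asymp q^{n-1}$ (uniformly, as in Lemma \ref{l3}). One has to express $\chi$ along $|\vv q|=q$ as $\sum_{d\mid q} d(d)\,\#\{|\vv q'|=q/d,\ \gcd(\vv q')=1\} \asymp \sum_{d\mid q} d(d)(q/d)^{n-1}$ for $n\ge 3$, or the $\varphi$-version for $n=2$, and check that for a primorial $q$ this is genuinely super-linear — larger than any prescribed multiple of $q^{n-1}$. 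This is a standard multiplicative-function estimate ($\sum_{d\mid q} d(d)/d \asymp \prod_{p\mid q}(1 + 2/p + \cdots)$, which is unbounded over primorials), but it is the crux: once this amplification is in hand, the telescoping of $\Phi$ and $\chi$ to their last terms via a sufficiently lacunary $\{q_j\}$ is routine, and \eqref{vb2} follows by matching $F$ against the divergence rate.
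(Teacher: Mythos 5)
Your plan follows the paper's own strategy almost exactly: support $\psi$ on a sparse subsequence of primorials, exploit the fact that the divisor-weighted quantity $f(l)=\sum_{v\mid l}d(v)\varphi(l/v)$ is super-linear in $l$ along primorials, and tune $\psi$ so that $\Phi$ grows slowly while $\chi$ inherits the unbounded multiplicative amplification. The paper packages the mass into blocks $[h_t,h_{t+1})$ each contributing exactly $1$ to $\sum l\psi(l)^m$ (rather than your single-point masses of size $1/j$), and it computes the amplification factor explicitly as $\theta(l)=\prod_{p\mid l}\frac{p}{p-1}$ with $f(l)\asymp l\theta(l)$, encoding the required sparsity of $\{h_t\}$ in the condition $\tfrac12\sum_{t=1}^T\theta(h_t)\ge F(8T+8)$; but these are merely different bookkeeping for the same idea, and your proposal captures the crux correctly.

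One slip is worth flagging. The normalization $q_j^{n-1}\psi(q_j)^m\cdot\#\{\vv q:|\vv q|=q_j\}=1/j$ double-counts the $q_j^{n-1}$ factor, since $\#\{\vv q:|\vv q|=q_j\}\asymp q_j^{n-1}$; taken literally it gives $\sum_q q^{n-1}\psi(q)^m\asymp\sum_j 1/(jq_j^{n-1})<\infty$, so (\ref{divslv}) would fail. You want simply $q_j^{n-1}\psi(q_j)^m=1/j$ (or $\#\{\vv q:|\vv q|=q_j\}\psi(q_j)^m=1/j$, but not both factors), analogously to the paper's $l\psi(l)^m=1/s_t$. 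With that correction, and tidying the $j$ versus $J$ indices in your final estimate (you want $G(q_j)\ge j\,F(C\log j)$ for all large $j$), the telescoping of $\Phi$ and $\chi$ goes through and (\ref{vb2}) follows just as in the paper. A smaller imprecision: for squarefree $l$ the correct evaluation is $f(l)/l=\prod_{p\mid l}(1+1/p)$, not $\prod_{p\mid l}(1+2/p)$, though both are unbounded along primorials so the conclusion is unaffected.
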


\noindent\textit{Remark.} Note that for any $\psi$  satisfying the  divergent condition (\ref{divslv}),  we trivially have that the main term $\Phi(h)$ in Schmidt's theorem tends to infinity  as $h \to \infty$. The lemma shows that there exists $\psi$ satisfying  (\ref{divslv}) for which the error term can be made as large as we please compared to the main term.
 For example, with  $F(x) := \exp(2x)$ there exists  $\psi$ for which the
error term  is eventually exponentially
larger than the main term.  Clearly, for such $\psi$ Schmidt's theorem does not enable us to conclude that
 $\lim_{h \to \infty}{\cal N}(\vv X,h)  = \infty $ for almost all $\vv X$ and therefore does not  imply Theorem \ref{t}.

\bigskip

\begin{proof}
Given $l\in\N$, it is easily seen that the number of points $\vv
q\in\Z^2$ such that $|\vv q|=l$ is equal to $8l-4$. With $n=2$,  it follows that
\begin{equation}\label{vb-1}
\Phi(h):=\sum_{|\vv q|\le h}\Psi(\vv q)^m=\sum_{l=1}^h\sum_{|\vv
q|=l}\psi(l)^m\le 8 \sum_{l=1}^h l\psi(l)^m
\end{equation}
 and
\begin{equation}\label{vb-2}
\begin{array}[b]{rcl}
\chi(h) & := & \displaystyle \sum_{|\vv q|\le h}\Psi(\vv
q)^md(\gcd(\vv q)) \ =\ \displaystyle \sum_{l=1}^h\sum_{|\vv
q|=l}\psi(l)^md(\gcd(\vv
 q)) \\[3ex]
 & = & \displaystyle \sum_{l=1}^h\psi(l)^m\sum_{v|l}d(v)\sum_{|\vv q'|=l/v,\ \gcd(\vv q')=1}1 \\[3ex]
 & \geq & \displaystyle
 \sum_{l=1}^h\psi(l)^m\sum_{v|l}d(v)\varphi(l/v)=\sum_{l=1}^h\psi(l)^m
 f(l)  \, ,
\end{array}
\end{equation}
where
$
f(l):=\sum_{v|l}d(v)\varphi(l/v).
$
%
On exploiting the fact that  $f$ is a multiplicative function, it is readily verified that
$$
f(l)  \, =  \, l\times\prod_{p|l}\frac{p}{p-1}\times\prod_{p|l}(1-p^{-k_p-1}),
$$
where $k_p$ is the largest integer $k$ satisfying $p^k|l$.
 Therefore, by  (\ref{priv}) we have
$$
\textstyle{\frac{6}{\pi^2}} \,l\,\theta(l)\le f(l)\le l\,\theta(l)   \quad \text{where} \quad \theta(l):=\prod_{p|l}\frac{p}{p-1}  \  . $$
Substituting this into (\ref{vb-2}) yields that
\begin{equation}\label{vb-3}
\chi(l)\ge\tfrac12 \sum_{l=1}^h l \, \psi(l)^m\theta(l).
\end{equation}
We will eventually  define $\psi$ to be supported on a subsequence of
$$\textstyle{l_n:=\prod_{i=1}^np_i     \qquad (n \in \N)   } \ , $$ where $p_i$ denotes  the $i$-th  prime.
Obviously,  $\theta(l)$  will then be  strictly increasing on the support of
$\psi$ and furthermore $\lim_{n\to\infty}\theta(l_n)=\infty$.

\vspace*{1ex}

Given an increasing function $F$,
let $\left\{ h_t \right\}_{t \in \N} $ be a subsequence of $\left\{ l_n \right\}_{n \in \N} $  such that for any $T\in\N$
\begin{equation}\label{vb1}
    \frac12\sum_{t=1}^T\theta(h_t)\ge F(8T+8)  \, .
\end{equation}
The existence of such a subsequence is guaranteed by the fact that
$\theta(l_n)  \to \infty$ as $n\to\infty$.
For $t\in\N$,  let $s_t$ denote the number of  terms $l_n$ such that $h_t \le
l_n\le h_{t+1}-1$. Clearly,  $s_t\ge1$ because $\left\{ h_t \right\}$ is a
subsequence of $\left\{ l_n \right\}$. Without loss of generality,  we can assume  that $s_t$ is
increasing since otherwise we work with  an appropriate subsequence of
$\left\{ h_t \right\}$. Now for any natural number $l$ satisfying $h_t \le l\le h_{t+1}-1$, define
$\psi(l)$ by setting
$$
l  \, \psi(l)^m   \, :=  \, \left\{\begin{array}{cl}
                    \displaystyle \frac{1}{s_t} &\quad\text{if } l=l_n\text{ for some
                    }n,\\[2ex]
                    0 & \quad\text{otherwise}.
                  \end{array}
\right.
$$
Set $\psi(l):=0$ for $1 \leq l<h_1$. It is easily seen that $\psi$ is
monotonically decreasing on its support. In view of the definition
of $\psi$,  we have that for every $t\in\N$
\begin{equation}\label{vb-4}
\sum_{l=h_t}^{h_{t+1}-1}l\psi(l)^m=1  \, .
\end{equation}
 Since $\theta(l)$ is
increasing on the support of $\psi$, we have that
\begin{equation}\label{vb-5}
\sum_{l=h_t}^{h_{t+1}-1}l\psi(l)^m\theta(l)\ge \theta(h_t)
\sum_{l=h_t}^{h_{t+1}-1}l\psi(l)^m \stackrel{(\ref{vb-4})}{=}
\theta(h_t)\,.
\end{equation}
Now for any natural number $h \geq h_2$, there exists $T\in\N$ such
that $h_{T+1}\le h<h_{T+2}$ and it follows that
$$
\begin{array}{rcl}
\chi(h) & \stackrel{(\ref{vb-3})}{\ge} & \displaystyle
\sum_{t=1}^{T}\sum_{l=h_t}^{h_{t+1}-1}l\psi(l)^m\theta(l) \
\stackrel{(\ref{vb-5})}{\ge} \ \displaystyle
\sum_{t=1}^{T}\theta(h_t)\stackrel{(\ref{vb1})}{\ge} F(8T+8)\\[4ex]
 & \stackrel{(\ref{vb-4})}{=} & \displaystyle
F\left(8\sum_{t=1}^{T+1}\ \sum_{l=h_t}^{h_{t+1}-1}l\psi(l)^m\right)
 \ \ge \ \displaystyle
F\left(8\sum_{l=1}^{h}l\psi(l)^m\right) \stackrel{(\ref{vb-1})}{\ge}
F(\Phi(h))\,.
\end{array}
$$

\noindent This verifies (\ref{vb2}) and thereby  completes the proof of Lemma \ref{exmp1}.
\end{proof}

\vspace*{2ex}

In view of Theorem \ref{t}, for any  $\psi$ arising  from Lemma \ref{exmp1} we still have that
\begin{equation}\label{asymp}
\lim_{h \to \infty}{\cal N}(\vv X,h)  = \infty    \qquad  {\rm  for \ almost \  all \ } \vv X \, .
\end{equation}
However, Schmidt's theorem fails to describe the asymptotic behavior of ${\cal N}(\vv X,h)$ and therefore the following problem  remains open.

\vspace*{2ex}

\noindent \textbf{Problem.}  For $n=2$ and $\psi$ satisfying the divergent sum condition (\ref{divslv}), describe the asymptotic behavior of ${\cal N}(\vv X,h)$.

\vspace*{1ex}

Lemma \ref{exmp1} can be naturally adapted to the  multivariable setup to show that there is not even a single choice of $n$ and $m$ for which  Schmidt's theorem implies Theorem \ref{t4}.

\begin{lemma}\label{examp2}
Let $n\ge2$ and $F:\Rp\to\Rp$ be an increasing function. Then there
exists an approximating function $\Psi:\Z^n\to\Rp$ satisfying the divergent sum condition of
$(\ref{more1})$ such that  $(\ref{vb2})$ holds.
\end{lemma}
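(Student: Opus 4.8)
The plan is to adapt the construction from the proof of Lemma~\ref{exmp1}; in the genuinely multivariable setting it becomes considerably simpler. The point is that, unlike a radial approximating function $\psi(|\vv q|)$, an arbitrary $\Psi:\Z^n\to\Rp$ may be supported entirely on vectors $\vv q$ whose greatest common divisor is highly composite, so that the divisor factor $d(\gcd(\vv q))$ appearing in $\chi(h)$ is large \emph{by construction}. This removes the need for the estimates on $\sum_{v\mid l}d(v)\varphi(l/v)$ and the auxiliary multiplicative function $f$ used in Lemma~\ref{exmp1}.

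Concretely, I would again work with the primorials $l_n=\prod_{i=1}^np_i$, for which $d(l_n)=2^n$ is strictly increasing and unbounded, together with the fixed direction $\vv e_1=(1,0,\dots,0)\in\Z^n$. Given the increasing function $F$, first choose inductively a strictly increasing sequence $\{h_t\}_{t\ge1}$ of primorials with $d(h_t)\ge F(t)$ for every $t$; this is possible precisely because $d(l_n)\to\infty$ while each $F(t)$ is finite. Then define $\Psi$ by setting $(2\Psi(h_t\vv e_1))^m:=1$ for each $t\ge1$ and $\Psi(\vv q):=0$ for every other $\vv q\in\Z^n\nz$ (exactly as for $\psi$ in the proof of Lemma~\ref{exmp1}, we permit $\Psi$ to vanish outside its support).

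It then remains to verify the two assertions. Divergence is immediate: $\sum_{\vv q\in\Z^n\nz}\Psi(\vv q)^m=2^{-m}\sum_{t\ge1}1=\infty$, which is the divergent sum condition of $(\ref{more1})$. For $(\ref{vb2})$, note that $|h_t\vv e_1|=h_t$ and $\gcd(h_t\vv e_1)=h_t$, so that, writing $T=T(h):=\#\{t\,:\,h_t\le h\}$, one has for all $h\ge h_1$
$$\Phi(h)=\sum_{t\,:\,h_t\le h}(2\Psi(h_t\vv e_1))^m=T\qquad\text{and}\qquad\chi(h)=\sum_{t=1}^{T}d(h_t)\ \ge\ d(h_T)\ \ge\ F(T)=F(\Phi(h)),$$
which is precisely $(\ref{vb2})$.

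There is no real obstacle in this argument: the sole ingredient is the elementary fact that the primorials satisfy $d(l_n)=2^n\to\infty$, which already supplies the $F$-dependent separation of $\Phi$ and $\chi$. If one also wished $\Psi$ to be monotonic along its support --- which is not required by the statement --- one would instead distribute the mass of the $t$-th block uniformly over all primorials in $[h_t,h_{t+1})$, exactly as in Lemma~\ref{exmp1}, with $d(l_n)$ now in the role played there by $\theta(l)$.
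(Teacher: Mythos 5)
Your proof is correct, but it takes a genuinely different route from the paper. The paper's proof is a two-line reduction to Lemma~\ref{exmp1}: take the radial $\psi$ constructed there for $n=2$ and define $\Psi(\vv q):=\psi(|\vv q|)$ if $\vv q=(q_1,q_2,0,\dots,0)$ and $\Psi(\vv q):=0$ otherwise, so that $\Phi$, $\chi$ and the divergent sum for $\Psi$ coincide with those already controlled in Lemma~\ref{exmp1}. Your argument is instead a self-contained construction that exploits the extra freedom of the multivariable setting: by supporting $\Psi$ on the single ray $\{h_t\vv e_1\}$ over a sparse sequence of primorials chosen so that $d(h_t)\ge F(t)$, you get $\Phi(h)=T(h)$ and $\chi(h)=\sum_{t\le T(h)}d(h_t)\ge d(h_{T(h)})\ge F(\Phi(h))$ directly, and the divergence of $\sum\Psi(\vv q)^m$ is immediate. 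This bypasses entirely the averaging over $\{|\vv q|=l\}$ and the analysis of the multiplicative function $f(l)=\sum_{v\mid l}d(v)\varphi(l/v)$ that form the technical core of Lemma~\ref{exmp1}, so your route is simpler. The trade-off is that your construction relies essentially on $\Psi$ vanishing off a thin set of $\vv q$'s, and hence does not by itself recover the radial case $\Psi(\vv q)=\psi(|\vv q|)$ of Lemma~\ref{exmp1}, which is the genuinely delicate instance; the paper's reduction, conversely, inherits that harder example for free.
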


\begin{proof}
Given $F$, let $\psi$ denote the approximating function arising from Lemma~\ref{exmp1}. The lemma  now immediately follows
on  defining $\Psi$ by
$$
\Psi(\vv q):=\left\{ \begin{array}{cl}
                       \psi(|\vv q|) & \text{if }\vv q=(q_1,q_2,0,\dots,0),
                       \\[2ex]
                       0 & \text{otherwise}.
                     \end{array}
\right.
$$
\vspace*{-3ex}
\end{proof}

\vspace*{2ex}

In view of Theorem \ref{t4}, for any  $\Psi$ arising  from Lemma \ref{examp2} we still have (\ref{asymp}). However, Schmidt's theorem is vacuous for such $\Psi$  and describing the asymptotic behavior of ${\cal N}(\vv X,h)$ remains an open problem.

\vspace*{4ex}

\noindent{\it Acknowledgements. }  We would like to thank the referees for their useful comments and  generously  sharing their mathematical insight.  In particular,  the proof of Lemma \ref{nbig1} as it now appears is due to one of the referees  -- our original proof was very much longer.   SV would like to say farewell to Ida Balacki  who told wonderful stories and grew the most amazing basil in London N4.

{\footnotesize

\def\cprime{$'$}

}

\vspace{2ex}

{\small

\noindent Victor V. Beresnevich: Department of Mathematics,
University of York,

\vspace{-2mm}

\noindent\phantom{Victor V. Beresnevich: }Heslington, York, YO10
5DD, England.

\vspace{-2mm}

\noindent\phantom{Victor V. Beresnevich: }e-mail: vb8@york.ac.uk

\vspace{5mm}

\noindent Sanju L. Velani: Department of Mathematics, University
of York,

\vspace{-2mm}

\noindent\phantom{Sanju L. Velani: }Heslington, York, YO10 5DD,
England.

\vspace{-2mm}

\noindent\phantom{Sanju L. Velani: }e-mail: slv3@york.ac.uk

}

\end{document}